\documentclass[12pt]{amsart}
\usepackage{amsmath,amssymb,amscd}
\usepackage[all]{xy}

%%%%%%%%%%%%%%%%%%%%%%%%%%%%%%%%%%%%%%%%%%%%%%%%%%%%Fundamenta preferred

\makeatletter
\@namedef{subjclassname@2010}{%
  \textup{2010} Mathematics Subject Classification}
\makeatother

%%%%%%%%%%% For IMPAN journals:

\frenchspacing

\textwidth=13.5cm
\textheight=23cm
\parindent=16pt
\oddsidemargin=-0.5cm
\evensidemargin=-0.5cm
\topmargin=-0.5cm

%%%%%%%%%%%%%%%%%%%%%%%%%%%%%%%%%%%

%%%%%%%%%%%%%%%%%%%%%author defined

\newtheorem{definition}{Definition}[section]
\newtheorem{theorem}[definition]{Theorem}
\newtheorem{proposition}[definition]{Proposition}
\newtheorem{lemma}[definition]{Lemma}
\newtheorem{corollary}[definition]{Corollary}
\newtheorem{remark}[definition]{Remark}
\newtheorem{example}[definition]{Example}
\numberwithin{equation}{section}

\setcounter{section}{0}

\def\RR{{\mathbb R}}
\def\NN{{\mathbb N}}
\def\ZZ{{\mathbb Z}}

\def\U{{\mathcal U}}
\def\V{{\mathcal V}}
\def\W{{\mathcal W}}

\def\B{{\mathrm B}}	% open ball
\def\CB{{\overline{\mathrm B}}}	% closed ball
\def\mesh{\operatorname{mesh}}	% mesh
\def\L{\operatorname{Leb}}		% Lebesgue number
\def\mul{\operatorname{mult}}	% multiplicity
\def\hmul{\text{-}\operatorname{mult}}	% r-multiplicity
\def\ANdim{\dim_{\operatorname{AN}}}	% Assouad-Nagata dimension
\def\asdim{\operatorname{asdim}}		% asymptotic dimension
\def\ANasdim{\asdim_{\operatorname{AN}}}	% asymptotic Assouad-Nagata dimension
\def\diam{\operatorname{diam}}	% diameter
\def\Lip{\operatorname{Lip}}	% Lipschitz constant	
\def\id{\operatorname{id}}		% identity map

%%%%%%%%%%%%%%%%%%%%%%%%%%%%%%%%%%%%%%%%%%%%%%%%%%%

\begin{document}
\baselineskip=17pt

\title{Dimension-raising maps in a large scale}

\author[T. Miyata]{Takahisa Miyata}
\address{Department of Mathematics and Informatics,  Graduate School of Human Development and Environment,
Kobe University, Kobe, 657-8501 JAPAN}
\email[T. Miyata]{tmiyata@kobe-u.ac.jp}

\author[\v Z, Virk]{\v{Z}iga Virk}
\address{Faculty of Mathematics and Physics, University of Ljubljana, Jadranska 21, Ljubljana, 1000 SLOVENIA}
\email[\v{Z}. Virk]{virk@fmf.uni-lj.si}

\date{}

\begin{abstract}
Hurewicz's dimension-raising theorem of  states that
	for every $n$-to-$1$ map $f: X\rightarrow Y$, $\dim Y \leq \dim X + n$ holds.
In this paper we introduce a new notion of finite-to-one like map in a large scale setting.
Using this notion we formulate a dimension-raising type theorem
	for the asymptotic dimension and the asymptotic Assouad-Nagata dimension.
It is also well-known as Hurewicz's finite-to-one mapping theorem that
	$\dim X \leq n$ if and only if there exists an $(n+1)$-to-$1$ map from a $0$-dimensional space onto $X$.
We formulate a finite-to-one mapping type theorem for the asymptotic dimension and the asymptotic Assouad-Nagata dimension.
\end{abstract}

\subjclass[2010]{Primary 54F45; Secondary  54E35}

\keywords{Asymptotic dimension, asymptotic Assouad-Nagata dimension, dimension-raising map, finite-to-one map, coarse category}

\maketitle
%%%
%%%
%%%
\section{Introduction}

Let us recall the classical Hurewicz dimension theorems for maps.

\begin{theorem}[Dimension-lowering theorem]\label{Hurewicz-lowering-1}
Let $f: X\rightarrow Y$ be a closed surjective map between metrizable spaces.
Then $\dim X \leq \dim Y + \dim f$,
	where $\dim f = \sup\{\dim f^{-1}(y): y\in Y\}$.
\end{theorem}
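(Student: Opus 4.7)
The plan is induction on the integer $k = \dim f$, combining a base case for closed surjections with zero-dimensional fibers and an inductive step built on the classical decomposition theorem of dimension theory.

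\textbf{Base case $k=0$.} I would first prove that if every fiber $f^{-1}(y)$ is zero-dimensional then $\dim X \leq \dim Y$. Given a (locally finite) open cover $\mathcal{U}$ of $X$, for each $y \in Y$ use $\dim f^{-1}(y) \leq 0$ to refine $\mathcal{U}|_{f^{-1}(y)}$ by a pairwise disjoint family of sets relatively clopen in $f^{-1}(y)$, and extend each such set to an open set of $X$. Closedness of $f$ then supplies an open neighborhood $V_y$ of $y$ over which the extended sets remain pairwise disjoint in $f^{-1}(V_y)$ and still refine $\mathcal{U}$. Setting $m = \dim Y$, refine $\{V_y\}_{y \in Y}$ to an open cover of multiplicity at most $m+1$, pull back by $f$, and intersect with the fiberwise disjoint families; the resulting open cover of $X$ refines $\mathcal{U}$ and has multiplicity at most $m+1$, giving $\dim X \leq m$.

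\textbf{Inductive step.} Assume the statement for maps of fiber-dimension at most $k-1$, and take $f$ with $\dim f \leq k$. The classical decomposition theorem splits each fiber as $f^{-1}(y) = A_y \cup B_y$ with $\dim A_y \leq 0$ and $\dim B_y \leq k-1$. I would globalize this to a decomposition $X = A \cup B$ in which $A$ is an $F_\sigma$ of dimension zero and the restrictions $f|_A$ and $f|_{\overline{B}}$ remain closed surjections onto $Y$ with fibers of dimension $\leq 0$ and $\leq k-1$, respectively. The base case gives $\dim A \leq \dim Y$, the inductive hypothesis gives $\dim \overline{B} \leq \dim Y + k - 1$, and the Menger--Urysohn decomposition theorem (using that $A$ is a zero-dimensional $F_\sigma$) yields $\dim X \leq \dim \overline{B} + 1 \leq \dim Y + k$.

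\textbf{Main obstacle.} The central difficulty is the globalization step: producing $A_y, B_y$ coherently enough in $y$ so that $A = \bigcup_{y \in Y} A_y$ is a genuine zero-dimensional $F_\sigma$ in $X$ and so that $f$ restricted to each piece is still closed. This is typically achieved by an intricate countable construction leveraging a $\sigma$-discrete basis of $X$, normality, and repeated applications of closedness of $f$ to upgrade local fiber data to data over neighborhoods in $Y$. The base case, though conceptually transparent, also hinges crucially on the closedness hypothesis exactly at this localization step, so losing closedness anywhere in the argument would collapse both halves of the proof.
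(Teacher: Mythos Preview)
The paper does not prove this statement. Theorem~\ref{Hurewicz-lowering-1} (along with Theorems~\ref{Hurewicz-raising-2} and~\ref{Hurewicz-finite-to-one-3}) is quoted in the introduction as a classical result of Hurewicz, purely to motivate the paper's large-scale analogues; the paper's own work begins in Section~3 and concerns asymptotic dimension and asymptotic Assouad--Nagata dimension, not covering dimension of metrizable spaces. There is therefore no proof in the paper to compare your proposal against.

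Regarding the proposal on its own terms: the base case is the standard lemma for closed maps with zero-dimensional fibers, and your sketch of it is correct. The inductive step, however, is only an outline. You correctly flag the globalization of the fiberwise decomposition $f^{-1}(y)=A_y\cup B_y$ as the main obstacle, and you do not resolve it. This is not a technicality: a union $\bigcup_y A_y$ of zero-dimensional sets need not be zero-dimensional, and there is no evident mechanism by which closedness of $f$ alone forces the individual fiber decompositions to cohere into a global $F_\sigma$ set $A$ with $\dim A\le 0$ while keeping $f|_{\overline{B}}$ closed with fiber dimension $\le k-1$. The classical proofs of Hurewicz's theorem in the metrizable category (Morita, Skljarenko, or the approach via extensions to spheres) do not proceed by this induction precisely because this step does not go through directly. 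As written, the inductive step restates what must be proved rather than proving it.
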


\begin{theorem}[Dimension-raising theorem]\label{Hurewicz-raising-2}
Let $f: X\rightarrow Y$ be a closed surjective map between metrizable spaces such that $|f^{-1}(y)| \leq n+1$ for each $y\in Y$.
Then $\dim Y \leq \dim X + n $.
\end{theorem}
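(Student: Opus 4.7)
The plan is to prove this classical theorem by induction on $n$, via an open-cover refinement argument that exploits the closed-map hypothesis.

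For the base case $n=0$, a continuous closed bijection between metrizable spaces is a homeomorphism, so $\dim Y=\dim X$.

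For the inductive step, set $k=\dim X$ and take any finite open cover $\V$ of $Y$; the goal is to construct an open refinement of multiplicity at most $k+n+1$. The key local ingredient, which uses only that $f$ is closed, goes as follows: for each $y\in Y$ with $f^{-1}(y)=\{x_1,\ldots,x_m\}$ where $m\leq n+1$, separate the $x_j$ by pairwise disjoint open sets $O_1,\ldots,O_m$, each chosen inside $f^{-1}(V)$ for some $V\in\V$; then $W(y)=Y\setminus f(X\setminus\bigcup_j O_j)$ is an open neighborhood of $y$ satisfying $f^{-1}(W(y))\subseteq\bigcup_j O_j$. Cover $Y$ by finitely many such $W(y_i)$, collect the associated $O_j(y_i)$ into an open cover of $X$, and refine it to an open cover $\U$ of multiplicity at most $k+1$ using $\dim X\leq k$.

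It remains to push $\U$ down to $Y$. Since each $U\in\U$ lies inside some $O_j(y_i)$, no single $U$ can contain an entire fiber of size $\geq 2$, so one works with groupings: for each subfamily $S\subseteq\U$ with $|S|\leq n+1$, set $V_S=Y\setminus f(X\setminus\bigcup_{U\in S}U)$, which is open by the closed-map hypothesis. The family $\{V_S\}$ covers $Y$ because every fiber of $f$ meets at most $n+1$ elements of $\U$ (one per fiber point). The main obstacle is the multiplicity estimate at a generic $y\in Y$: a naive count yields $(k+1)(n+1)$, whereas the sharp bound $k+n+1$ is required. I expect this to be the technical heart of the argument; the standard route is either to invoke the inductive hypothesis on the restriction of $f$ to a suitable closed subset where the effective fiber size drops and to patch via the countable sum theorem for covering dimension, or to exploit the disjointness of the $O_j$ established in the local step to show that many of the potential $V_S$ through a given $y$ must coincide or collapse, yielding the required inequality $\dim Y\leq k+n$.
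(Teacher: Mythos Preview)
The paper does not prove this statement at all: Theorem~\ref{Hurewicz-raising-2} is one of the three classical Hurewicz theorems quoted in the introduction as background and motivation, and no proof is supplied anywhere in the paper. So there is nothing to compare your proposal against.

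As for the proposal itself, it is not a proof but an outline that stops precisely at the hard step, and you say so yourself. Two concrete issues:

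\begin{itemize}
\item You pass from the cover $\{W(y):y\in Y\}$ to a \emph{finite} subcover $\{W(y_i)\}$. Nothing in the hypotheses gives you compactness of $Y$; metrizable spaces are paracompact, so you get locally finite refinements, but not finite ones. The subsequent step ``collect the associated $O_j(y_i)$ into an open cover of $X$'' and the formation of the $V_S$ indexed by \emph{subfamilies} of $\U$ both rely on this finiteness in an essential way.
\item The multiplicity bound is the entire content of the theorem, and you explicitly do not establish it. The naive pushforward gives order $(k+1)(n+1)$, which corresponds to the trivial inequality $\dim Y\le (\dim X+1)(n+1)-1$; getting down to $k+n+1$ is exactly what distinguishes Hurewicz's theorem from a soft argument. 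Your two suggested escape routes are both vague: the ``restrict to where the fiber size drops and use the countable sum theorem'' idea is in the right spirit, but you have not identified the closed sets to which the inductive hypothesis applies, nor checked that the restriction of $f$ to them is again closed and surjective onto something useful; and the ``many $V_S$ coincide or collapse'' alternative is simply not argued.
\end{itemize}

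In short, what you have written is a plausible setup followed by an honest acknowledgment of the gap; it is not yet a proof.
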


\begin{theorem}[Finite-to-one mapping theorem]\label{Hurewicz-finite-to-one-3}
Let $X$ be a metrizable space.
Then $\dim X \leq n$ if and only if there exists a zero-dimensional metric space $Y$
	and a closed surjective map $f: Y \rightarrow X$ such that $|f^{-1}(x)| \leq n+1$ for each $x\in X$.
\end{theorem}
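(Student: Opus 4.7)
The plan is to treat the two directions separately. The $(\Leftarrow)$ direction is immediate: given a zero-dimensional metric space $Y$ and a closed surjective map $f\colon Y \to X$ with $|f^{-1}(x)| \leq n+1$, Theorem~\ref{Hurewicz-raising-2} yields $\dim X \leq \dim Y + n = n$.

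For the $(\Rightarrow)$ direction, the plan is to construct $Y$ by a thread/inverse-limit construction over a sequence of open covers of $X$ of controlled multiplicity. Fix a compatible metric on $X$. Using the standard characterization of $\dim X \leq n$ for metric spaces in terms of open-cover multiplicity, I would inductively build locally finite open covers $\U_1, \U_2, \ldots$ of $X$ with $\mul(\U_k) \leq n+1$, $\mesh(\U_k) < 1/k$, and a strong refinement property (for every $U \in \U_{k+1}$ there exists $V \in \U_k$ with $\overline{U} \subseteq V$). Define
\[
Y = \Bigl\{ (U_k) \in \textstyle\prod_k \U_k : \overline{U_{k+1}} \subseteq U_k \text{ for all } k,\ \bigcap_k \overline{U_k} \neq \emptyset \Bigr\},
\]
topologised as a subspace of the product of the discrete spaces $\U_k$, and set $f((U_k))$ to be the unique point in $\bigcap_k \overline{U_k}$; uniqueness follows from the mesh condition.

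The routine verifications are the following. $Y$ is a subspace of a countable product of metrizable zero-dimensional spaces, so it is itself zero-dimensional and metrizable. The map $f$ is continuous because $\mesh(\U_k) \to 0$. Surjectivity of $f$ is obtained by threading through any given $x \in X$, using the strong refinement to preserve the nesting at each step. The bound $|f^{-1}(x)| \leq n+1$ holds because two distinct threads through $x$ must eventually differ in some coordinate $k$, whereas at most $n+1$ members of $\U_k$ contain $x$ by the multiplicity bound.

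The main obstacle is closedness of $f$. My plan is to verify this via the equivalent sequential properness: given $x_m \to x$ in $X$ and preimages $y_m = (U_k^{(m)}) \in f^{-1}(x_m)$, local finiteness of each $\U_k$ near $x$ forces only finitely many distinct values of $U_k^{(m)}$ to appear for large $m$. A diagonal extraction then produces a subsequence whose $k$-th coordinate stabilises to some $U_k^\star$ for every $k$, and the resulting thread $y = (U_k^\star)$ lies in $Y$ with $f(y) = x$. Combined with finiteness of the fibers and metrizability of $X$ and $Y$, this sequential compactness of preimages yields closedness of $f$ and completes the proof.
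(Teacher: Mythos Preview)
The paper does not prove this statement: Theorem~\ref{Hurewicz-finite-to-one-3} is recalled in the introduction as one of the classical Hurewicz theorems motivating the large-scale analogues, and no argument for it appears anywhere in the paper. So there is nothing to compare against; what follows is an assessment of your proof on its own terms.

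Your $(\Leftarrow)$ direction is correct. In the $(\Rightarrow)$ direction, however, the fiber bound $|f^{-1}(x)| \leq n+1$ does not follow from what you wrote. The multiplicity hypothesis says that for each fixed $k$ at most $n+1$ members of $\U_k$ contain $x$; this bounds the set of possible $k$-th coordinates of a thread through $x$, not the number of threads. Concretely, take $n=1$ and suppose that at every level exactly two sets $A_k, B_k \in \U_k$ contain $x$, with $\overline{A_{k+1}} \cup \overline{B_{k+1}} \subseteq A_k \cap B_k$. Then every binary sequence $(C_k)_k$ with $C_k \in \{A_k, B_k\}$ is a legitimate element of your $Y$ mapping to $x$, so $f^{-1}(x)$ is uncountable. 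Nothing in your construction (multiplicity $\leq 2$, mesh $<1/k$, local finiteness, strong refinement) excludes this configuration, since strong refinement only asks for \emph{some} parent, not a unique one.

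The standard repair is to rigidify the nesting: assign to each $U \in \U_{k+1}$ a single designated parent $\pi(U) \in \U_k$ with $\overline{U} \subseteq \pi(U)$, and take $Y$ to consist only of $\pi$-compatible threads. For fixed $x$ the sets containing $x$ then form an inverse system of finite sets of size at most $n+1$, and an elementary eventual-bijectivity argument shows its inverse limit has at most $n+1$ points. This designated-parent device is precisely what the paper builds into its large-scale version (condition~(1) of Theorem~\ref{ThmUltrametricCover} and property~(3) in the proof of Theorem~\ref{ThmAsdimPrecode}); with it in place, your surjectivity and closedness sketches go through essentially as written.
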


G. Bell and A. Dranishnikov \cite{Bell-Dranishnikov-1} proved the dimension-lowering theorem for the asymptotic dimension,
	and N. Brodskiy, J. Dydak, M. Levin, and A. Mitra \cite{Brodskiy-Dydak-Levin-Mitra-1} generalized it to
	the Assouad-Nagata dimension and the asymptotic Assouad-Nagata dimension.
However, there is no simple translation of  the dimension-raising theorem in a large scale setting
	since there are simple one-to-one dimension raising coarse maps.
	
In this paper  we introduce conditions, called (B)${}_n$ and (C)${}_n$, respectively,
	which correspond to the condition that a map is $n$-to-$1$.
Using those conditions we	formulate a dimension-raising type theorem and a finite-to-one mapping type theorem
		for the asymptotic dimension and the asymptotic Assouad-Nagata dimension.

Our main theorems for the asymptotic dimension $\asdim$ state:

\begin{theorem}\label{asymptotic-1}
Let $X$ and $Y$ be metric spaces, and let $f: X\rightarrow Y$ be a coarse, coarsely surjective map with the following property:
\begin{enumerate}
\item[(B)${}_n$]
For each $r < \infty$, there exists $d < \infty$ so that
	for each subset $B$ of $Y$ with $\diam(B) \leq r$, $f^{-1}(B) = \bigcup_{i=1}^n A_i$ for some subsets $A_i$ of $X$ with $\diam(A_i) \leq d$ for $i = 1, \ldots, n$.
\end{enumerate}
Then the following holds:
$$\asdim Y \leq (\asdim X + 1)\cdot n -1$$
\end{theorem}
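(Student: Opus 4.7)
The plan is to use the standard multiplicity reformulation of asymptotic dimension: $\asdim X\leq k$ if and only if, for every $L>0$, there is a uniformly bounded cover $\mathcal V$ of $X$ which decomposes as $\mathcal V=\mathcal V_0\cup\cdots\cup\mathcal V_k$ with each $\mathcal V_i$ being $L$-disjoint.  Starting from such a cover of $X$ and pushing it forward by $f$, I will use condition (B)${}_n$ to bound the $r$-multiplicity of the resulting cover of $Y$ by $(k+1)n$, which is equivalent to $\asdim Y\leq (k+1)n-1$.

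Concretely, given $r>0$ in $Y$, first apply (B)${}_n$ with the value $2r$ to obtain $d<\infty$ such that every $B\subseteq Y$ with $\diam B\leq 2r$ admits a decomposition $f^{-1}(B)=A_1\cup\cdots\cup A_n$ with $\diam A_i\leq d$.  Choose any $L>d$, and use $\asdim X\leq k$ to fix a cover $\mathcal V=\mathcal V_0\cup\cdots\cup\mathcal V_k$ of $X$ with each $\mathcal V_i$ being $L$-disjoint and with $\diam V\leq D$ for all $V\in\mathcal V$.  Since $f$ is coarse (hence bornologous), there is $D'$ depending only on $D$ such that $\diam f(V)\leq D'$ for every $V$.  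Set $\mathcal W_i=\{f(V):V\in\mathcal V_i\}$ and $\mathcal W=\bigcup_i\mathcal W_i$; assuming $f$ is (coarsely) surjective, $\mathcal W$ is a uniformly bounded cover of $Y$ after a harmless uniform thickening.

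The heart of the argument is to show that each $\mathcal W_i$ has $r$-multiplicity at most $n$.  Suppose $V_1,\ldots,V_m$ are distinct elements of $\mathcal V_i$ and $y\in Y$ satisfies $d(y,f(V_j))\leq r$ for every $j$.  Pick $y_j\in f(V_j)$ with $d(y,y_j)\leq r$ and a lift $x_j\in V_j$ with $f(x_j)=y_j$.  Then $B=\{y,y_1,\ldots,y_m\}$ has diameter at most $2r$, so (B)${}_n$ partitions $f^{-1}(B)$ into $n$ pieces of diameter at most $d$.  Each $x_j$ lies in one of these pieces, and if two $x_j,x_{j'}$ fell into the same piece then $d(x_j,x_{j'})\leq d<L$, contradicting the $L$-disjointness of $\mathcal V_i$ (since $x_j\in V_j$, $x_{j'}\in V_{j'}$, $V_j\neq V_{j'}$).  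Hence $m\leq n$, and summing over the $k+1$ colors yields total $r$-multiplicity at most $(k+1)n$, giving the desired bound.

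The main obstacle I foresee is bookkeeping the chain of parameters $r\leadsto 2r\leadsto d\leadsto L\leadsto D\leadsto D'$ so that the output cover depends only on $r$, together with — if (coarse) surjectivity is not part of the standing convention for coarse maps — a short argument that points of $Y\setminus f(X)$ can be absorbed into the cover without spoiling either the uniform bound on diameters or the multiplicity estimate.
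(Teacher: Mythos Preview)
Your approach is essentially the paper's: push forward a suitable cover of $X$ under $f$ and use (B)${}_n$ together with pigeonhole to bound the $r$-multiplicity of the resulting family in $Y$ by $(\asdim X+1)\cdot n$; the only cosmetic difference is that the paper starts from the $d$-multiplicity characterization (Proposition~\ref{ASYM-0}(3)) and packages the pigeonhole step as the inequality $r\hmul(f(\U))\le d\hmul(\U)\cdot n$ (Lemma~\ref{ASYM-1}), whereas you start from $L$-disjoint color classes and bound each $\mathcal W_i$ separately. Your worry about surjectivity is legitimate and the paper glosses over it as well---without the image of $f$ being coarsely dense the conclusion can fail (e.g.\ $X=\{\ast\}\hookrightarrow Y=\RR$ satisfies (B)${}_1$), so one must either assume coarse surjectivity or note that (B)${}_n$ with nonempty preimages already forces it.
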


\begin{theorem}\label{asymptotic-2}
Let $X$ be a metric space.
Then $\asdim X \leq n$ if and only if there exist a metric space $Y$ with $\asdim Y = 0$
	and a coarse map $f: Y \rightarrow X$ with condition (B)${}_{n+1}$.
\end{theorem}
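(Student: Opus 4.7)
\emph{$(\Leftarrow)$.} Given $Y$ with $\asdim Y=0$ and $f\colon Y\to X$ satisfying (B)${}_{n+1}$, Theorem~\ref{asymptotic-1} (applied with $n+1$ in place of $n$) yields
\[
\asdim X \;\le\; (\asdim Y+1)(n+1) - 1 \;=\; n.
\]

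\emph{$(\Rightarrow)$.} Assume $\asdim X\le n$. I must construct $Y$ with $\asdim Y=0$ together with a coarse map $f\colon Y\to X$ satisfying (B)${}_{n+1}$. The starting data is a sequence of covers witnessing $\asdim X\le n$: for every integer $k\ge 1$ there is a uniformly bounded $(n+1)$-coloured cover $\mathcal V_k = \mathcal V_k^{1}\sqcup\cdots\sqcup\mathcal V_k^{n+1}$ of $X$ whose colour classes are $k$-disjoint families of diameter at most $D_k$. By standard manipulations one may further arrange that $\mathcal V_k$ refines $\mathcal V_{k+1}$ inside each colour class.

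I then build $Y$ as a coarse disjoint union of scale-labelled pieces, modulo a compatibility relation across scales. Concretely, put
\[
Y \;=\; \Big(\,\bigsqcup_{k\ge 1}\;\bigsqcup_{c=1}^{n+1}\;\bigsqcup_{U\in\mathcal V_k^{c}} U\,\Big)\Big/{\sim},
\]
where $\sim$ identifies each $x\in U\in\mathcal V_k^{c}$ with its counterpart in the parent $U'\in\mathcal V_{k+1}^{c}$ whenever $U\subseteq U'$. Equip each equivalence class with $d_X|_U$ and declare distinct classes to be at infinite distance. The map $f\colon Y\to X$ is the obvious projection sending each piece back into $X$.

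The three properties are verified as follows.
\begin{itemize}
\item $f$ is bornologous because any pair of points at finite $d_Y$-distance lies within a single (class-) piece of $d_X$-diameter at most $D_k$ for some $k$.
\item For (B)${}_{n+1}$: given $B\subseteq X$ with $\diam B\le r$, pick $k\ge r$; the $k$-disjointness of each colour class forces $B$ to meet at most one piece per colour, so $f^{-1}(B)$ is covered by at most $n+1$ equivalence classes, each of $d_Y$-diameter at most $D_k$.
\item For $\asdim Y=0$: at scale $R$, use the pieces of $\mathcal V_R$ as representatives of the equivalence classes. They have uniform diameter $\le D_R$; within each colour they are $R$-disjoint, and across colours they are $\infty$-apart in $Y$. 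Together they form an $R$-disjoint uniformly bounded cover of $Y$.
\end{itemize}

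\emph{Main obstacle.} The chief technical difficulty is the coherent organisation of the multi-scale covers and the specification of the metric on $Y$ so that the three verifications go through uniformly. The cross-scale identifications must not enlarge preimages beyond $n+1$ bounded pieces, yet must leave colour classes separated enough for $Y$ to admit the required $R$-disjoint uniformly bounded covers. Handling the ambiguity when $x\in X$ lies in more than one colour of a single $\mathcal V_k$ (which forces a choice function in the definition of $f$) and verifying that the resulting $d_Y$ is a genuine extended metric are the principal technical points.
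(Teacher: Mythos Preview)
Your $(\Leftarrow)$ direction is correct and matches the paper.

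Your $(\Rightarrow)$ direction has a genuine gap in the construction of $Y$.  First, the ``standard manipulation'' producing colour-preserving refinement ($U\in\mathcal V_k^{c}\Rightarrow U\subseteq U'\in\mathcal V_{k+1}^{c}$) is not standard at all: the colourings at different scales are a priori unrelated, and forcing this compatibility is precisely where the real work lies.  Second, and more seriously, the quotient metric you describe does not do what you need.  Trace the relation $\sim$: for fixed colour $c$ and fixed $x\in X$, the copies $(k,c,U,x)$ for all levels $k$ at which $x$ lies in colour $c$ are identified to a \emph{single point}.  So the colour-$c$ part of $Y$ is just a subset $Y_c\subseteq X$, and ``equip each equivalence class with $d_X|_U$'' gives it the subspace metric from $X$.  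But then $\asdim Y_c$ can be as large as $\asdim X$; your proposed scale-$R$ cover by pieces of $\mathcal V_R^{c}$ only covers $X_R^{c}=\bigcup\mathcal V_R^{c}$, not the larger set $Y_c=\bigcup_k X_k^{c}$.  If instead you read ``distinct classes at infinite distance'' literally at the point level, then any two distinct points of $Y$ are infinitely far apart, and $f^{-1}(B)$ is an infinite set of pairwise infinite diameter, so (B)$_{n+1}$ fails.

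The paper avoids this by working with a sequence of \emph{partitions} $\mathcal U_0,\mathcal U_1,\ldots$ (not coloured covers) built inductively so that every $U\in\mathcal U_k$ sits in a unique $U'\in\mathcal U_{k+1}$ and $k\hmul(\mathcal U_k)\le n+1$.  The domain is then $Y=\mathcal U_0$ equipped with the ultrametric $d_B(V,W)=3^{p(V,W)}$, where $p$ is the first level at which $V,W$ share a common ancestor; the map $q$ sends $U\in\mathcal U_0$ to any point of $U$.  The ultrametric gives $\asdim Y=0$ for free, and the multiplicity bound yields (B)$_{n+1}$ directly.  The inductive construction of the nested partitions (assigning each old piece to a single new piece, and tracking the resulting mesh and multiplicity) is the substantive step you are missing.
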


Our main theorems for the asymptotic Assouad-Nagata dimension $\ANasdim$ state:

\begin{theorem}\label{asymptotic-AN-1}
Let $X$ and $Y$ be metric spaces, and let $f: X\rightarrow Y$ be an asymptotically Lipschitz, coarsely surjective map with the following property:
\begin{enumerate}
\item[(C)${}_n$]
There exist $c, d > 0$ so that
	for each $r < \infty$ and for each subset $B$ of $Y$ with $\diam(B) \leq r$,
	$f^{-1}(B) = \bigcup_{i=1}^n A_i$ for some subsets $A_i$ of $X$ with $\diam(A_i) \leq cr + d$ for $i = 1, \ldots, n$.
\end{enumerate}
Then the following holds:
$$\ANasdim Y \leq (\ANasdim X + 1)\cdot n -1$$
\end{theorem}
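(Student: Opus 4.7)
My plan is to mimic the (unstated) argument for Theorem \ref{asymptotic-1}, while carefully tracking all constants so that every relevant quantity depends linearly on the prescribed Lebesgue number $r$ in $Y$. The main tool is the colored characterization of the asymptotic Assouad--Nagata dimension: $\ANasdim X \leq k$ means there exist constants $c_0, d_0 > 0$ such that for every $s > 0$ one can cover $X$ by $k+1$ families $\mathcal{U}^0, \ldots, \mathcal{U}^k$, each $\mathcal{U}^i$ being $s$-disjoint (any two distinct members of $\mathcal{U}^i$ lie at mutual distance at least $s$), with $\mesh\bigl(\bigcup_i \mathcal{U}^i\bigr) \leq c_0 s + d_0$. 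I would combine this with the constants $c, d$ from (C)${}_n$ and the asymptotic Lipschitz constants $L, C$ of $f$ (so $d(f(x), f(x')) \leq L\, d(x, x') + C$); as is standard for dimension-raising results I will treat $f$ as surjective.

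Given $r > 0$, I set $s := 2cr + d + 1$ and pick the corresponding colored cover $\mathcal{U} = \bigcup_i \mathcal{U}^i$ of $X$. I then push forward and swell in $Y$, defining $\mathcal{W}^i := \{N_r(f(U)) : U \in \mathcal{U}^i\}$ (where $N_r(\cdot)$ is the $r$-neighborhood in $Y$) and $\mathcal{W} := \bigcup_i \mathcal{W}^i$. The mesh of $\mathcal{W}$ is bounded by $L(c_0 s + d_0) + C + 2r$, which is linear in $r$ precisely because $s$ is. For the Lebesgue number, given $B \subseteq Y$ with $\diam B \leq r$, pick $y_0 \in B$, write $y_0 = f(x_0)$, and choose $U_0 \ni x_0$ in $\mathcal{U}$; then every $y \in B$ satisfies $d(y, f(U_0)) \leq d(y, y_0) \leq r$, hence $B \subseteq N_r(f(U_0)) \in \mathcal{W}$, which simultaneously shows that $\mathcal{W}$ covers $Y$ and has Lebesgue number at least $r$.

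The crucial step, where (C)${}_n$ is used, is the multiplicity bound on each $\mathcal{W}^i$. Suppose $y \in Y$ lies in $n+1$ distinct members $N_r(f(U_1)), \ldots, N_r(f(U_{n+1}))$ of $\mathcal{W}^i$, and for each $j$ choose $x_j \in U_j$ with $d(f(x_j), y) \leq r$. Then $x_1, \ldots, x_{n+1} \in f^{-1}(B(y, r))$, and (C)${}_n$ applied to $B(y, r)$ (of diameter at most $2r$) decomposes this preimage into $n$ sets of diameter at most $2cr + d$. Pigeonhole forces some $x_{j_1}, x_{j_2}$ into a common piece, whence $d(x_{j_1}, x_{j_2}) \leq 2cr + d < s$, contradicting the $s$-disjointness of $\mathcal{U}^i$. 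Consequently each $\mathcal{W}^i$ has multiplicity at most $n$, so $\mathcal{W}$ has multiplicity at most $(k+1)n$, yielding $\ANasdim Y \leq (k+1)n - 1$. The main obstacle is exactly the balance in the choice of $s$: it must be large enough as a function of $r$ to defeat the pigeonhole estimate but still linear in $r$ to preserve the linear mesh bound required in the Assouad--Nagata setting, and this is precisely the improvement (C)${}_n$ offers over (B)${}_n$.
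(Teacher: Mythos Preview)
Your argument is correct and follows essentially the same push-forward-and-pigeonhole strategy as the paper: choose a cover of $X$ at a scale linear in $r$, map it to $Y$, bound the mesh via the asymptotic Lipschitz constants, and use (C)${}_n$ with pigeonhole to control multiplicity. The only cosmetic difference is that the paper works with the $r$-multiplicity characterization of $\ANasdim$ (Proposition~\ref{ASYM-AN-0}(2)), taking $\V = f(\U_r)$ without any swelling and invoking a lemma that $r\hmul(f(\U)) \leq cr\hmul(\U)\cdot n$ (whose proof is exactly your pigeonhole step), whereas you use the colored $s$-disjoint description on the $X$ side and the Lebesgue/multiplicity characterization on the $Y$ side, which is what forces the extra $N_r$-swelling.
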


\begin{theorem}\label{asymptotic-AN-2}
Let $X$ be a metric space.
Then $\ANasdim X \leq n$ if and only if there exist a metric space $Y$ with $\ANasdim Y = 0$
	and an asymptotic Lipschitz map $f: Y \rightarrow X$ with condition {\rm (C)}${}_{n+1}$.
\end{theorem}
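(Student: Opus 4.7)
The backward implication follows directly from Theorem~\ref{asymptotic-AN-1}: given $Y$ with $\ANasdim Y = 0$ and an asymptotically Lipschitz map $f \colon Y \to X$ satisfying (C)$_{n+1}$, applying Theorem~\ref{asymptotic-AN-1} with the roles of $X$ and $Y$ swapped and with parameter $n+1$ yields $\ANasdim X \leq (\ANasdim Y + 1)(n+1) - 1 = n$.

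For the forward implication, the plan is to produce $Y$ as a disjoint union built from a Hurewicz-type decomposition of $X$. First I would invoke the decomposition theorem for the asymptotic Assouad--Nagata dimension due to Brodskiy, Dydak, Levin, and Mitra to write $X = X_0 \cup X_1 \cup \cdots \cup X_n$ with $\ANasdim X_i \leq 0$ for each $i$, moreover with uniform dimension-control constants $c, d > 0$ shared across all pieces. I would then define
\[
Y = \bigsqcup_{i=0}^n X_i \times \{i\}, \qquad d_Y\bigl((x, i), (x', j)\bigr) = d_X(x, x') + |i - j|,
\]
so that each slice $X_i \times \{i\}$ is isometric to $X_i$ while distinct slices are separated by at least one unit. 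The candidate map is the natural projection $f(x, i) = x$, which is $1$-Lipschitz and hence asymptotically Lipschitz. Property (C)$_{n+1}$ then holds with constants $1$ and $0$: for $B \subseteq X$ with $\diam(B) \leq r$, the preimage $f^{-1}(B) = \bigsqcup_{i=0}^{n} (B \cap X_i) \times \{i\}$ splits into $n+1$ pieces each of diameter at most $r$.

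The principal obstacle is verifying $\ANasdim Y \leq 0$. The naive attempt of simply unioning the scale-$r$ covers of the individual slices fails beyond small scales: whenever $x \in X_i \cap X_j$ the lifts $(x, i)$ and $(x, j)$ lie at distance only $|i - j|$ in $Y$, so once $r > n$ no union of per-slice covers can be $r$-disjoint across slices. My plan is to resolve this by appealing to the finite union theorem for the asymptotic Assouad--Nagata dimension: since each slice $X_i \times \{i\}$ is a subspace of $Y$ isometric to $X_i$ and so has $\ANasdim \leq 0$ under uniform constants, applying the finite union theorem inductively to $Y = \bigcup_{i=0}^n (X_i \times \{i\})$ yields $\ANasdim Y \leq 0$ with constants depending only on $c$, $d$, and $n$, which combined with the previous paragraph completes the proof.
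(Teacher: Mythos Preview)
Your backward implication is correct and matches the paper. For the forward direction, however, there is a genuine gap: the ``Hurewicz-type decomposition theorem'' you invoke---writing $X=X_0\cup\cdots\cup X_n$ with $\ANasdim X_i\le 0$ under uniform constants---is not in Brodskiy--Dydak--Levin--Mitra. That paper proves a dimension-\emph{lowering} Hurewicz inequality for maps (an analogue of Theorem~\ref{Hurewicz-lowering-1}), not a decomposition of a space into zero-dimensional pieces. A decomposition of this form for the asymptotic Assouad--Nagata dimension is not a standard citable result; even for plain $\asdim$ the analogous statement requires a careful inductive construction of nested covers at a sequence of scales, and the linear control demanded by $\ANasdim$ makes the argument more delicate still. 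In effect you are assuming a statement of roughly the same strength as the theorem you are trying to prove, and then combining it with the finite-union theorem to recover the conclusion.

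The paper's route is direct and self-contained, and it is precisely where the work lies. From $\ANasdim X\le n$ it builds an $(n{+}1)$-\emph{precode structure} (Theorem~\ref{ThmAsdimANPrecode}): a nested sequence of disjoint uniformly bounded covers $\U_0,\U_1,\ldots$ with $\mesh(\U_i)\le a^i$ and, for each large $r$, an index $i$ with $a^i\le cr$ and $r\hmul(\U_i)\le n+1$. The set $Y=\U_0$ is then given the ultrametric $d_C(U,V)=a^{p(U,V)}$, where $p(U,V)$ is the first level at which $U$ and $V$ lie in a common element, and the map $q\colon\U_0\to X$ sending each $U$ to a point of $U$ is shown to be asymptotically Lipschitz with property (C)$_{n+1}$ (Theorem~\ref{ThmUltrametricCoverAsdimAN}). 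No external decomposition or union theorem is needed; indeed the paper's own characterization of $\ANasdim=0$ via ultrametrics (Corollary~\ref{CorAsdimAN-3}) is a \emph{consequence} of this construction rather than an input to it.
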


The ``if'' parts of Theorems \ref{asymptotic-2} and \ref{asymptotic-AN-2} immediately follow from Theorems  \ref{asymptotic-1} and \ref{asymptotic-AN-1},
	respectively.
For the ``only if '' parts of Theorems \ref{asymptotic-2} and \ref{asymptotic-AN-2},
	we introduce the notion of $n$-precode structure, which is a sequence of covers with some conditions
	determining a map with property {\rm (B)}${}_{n+1}$ or {\rm (C)}${}_{n+1}$ from an ultrametric space to the given space.

We give various examples of dimension-raising maps.
In particular, we present a simple example of $1$-precode structure for $(\ZZ, d_{\varepsilon})$ with the Euclidean metric $d_{\varepsilon}$.

%As a consequence of Theorem \ref{asymptotic-2}, it is shown that every metric space $X$ has asymptotic dimension $0$
%		if and only if $X$ is coarsely equivalent to an ultrametric space.
%This generalizes the result of J. de Groot \cite{de-Groot-1} (also J. Nagata \cite{Nagata-1}) for the covering dimension
%	and the result of Brodskiy, Dydak, Levin, and Mitra \cite{Brodskiy-Dydak-Levin-Mitra-1} for the Assouad-Nagata dimension.

A finite-to-one mapping theorem for the Asouad-Nagata dimension was obtained in  \cite{Miyata-Yoshimura-1},
	in which a condition called (B) (see Section 3) was introduced.
Using condition (B),
	we show a dimension-raising type theorem for the Assouad-Nagata dimension as well.

Throughout the paper, $\NN$, $\ZZ$, $\RR$, $\RR_{+}$ denote the set of nonnegative integers, the set of integers,
	the set of real numbers, and the set of positive real numbers, respectively.
For any set $X$, let $\id_X$ denote the identity map on $X$.

%%%
%%%
%%%
\section{Asymptotic dimension, Assouad-Nagata dimension, and asymptotic Assouad-Nagata dimension}

In this section, we recall the definitions and properties of the asymptotic dimension, the Assouad-Nagata dimension, and the asymptotic Assouad-Nagata dimension.
For more details, the reader is referred to
	\cite{Bell-Dranishnikov-1},  \cite{Bell-Dranishnikov-2}, \cite{Lang-Schlichenmaier-1}, and \cite{Dranishnikov-Smith-1}.

Let $(X, d)$ be a metric space.
For each $x\in X$ and $r>0$, let $\B(x, r) = \{y\in X: d(x, y) < r\}$, and $\CB(x, r) = \{y\in X: d(x, y) \leq r\}$.
For each subset $A$ of $X$, let $\diam(A)$ denote the diameter of $A$.

Let $\U$ be a cover of $X$.
The multiplicity of $\U$, in notation, $\mul(\U)$, is defined as the largest integer $n$
	so that no point of $X$ is contained in more than $n$ elements of $\U$,
	and the $r$-multiplicity of $\U$, in notation, $r\hmul(\U)$, is defined as the largest integer $n$
	so that no subset of diameter at most $r$ meets more than $n$ elements of $\U$.
The Lebesgue number of $\U$, in notation, $\L(\U)$, is defined as the supremum of positive numbers $r$
	so that for every subset $A$ with $\diam(A) \leq r$, there exists $U\in\U$ with $A\subset U$.
The mesh of $\U$, in notation, $\mesh(\U)$, is $\sup\{\diam(U): U\in \U\}$, and
	$\U$ is said to be uniformly bounded if $\mesh(\U) < \infty$.
A family $\U$ of subsets of $X$ is said to be $r$-disjoint
	if $d(x, x') > r$ for any $x$ and $x'$ that belong to different elements of $\U$.
	
A metric space $X$ is said to have the asymptotic dimension at most $n$, in notation, $\asdim X \leq n$,
	if there exists a function $D_X: \RR_{+} \rightarrow \RR_{+}$ (called an $n$-dimensional control function for $X$)
	so that for every $r<\infty$  there exist $r$-disjoint families $\U^0, \ldots, \U^n$ of subsets of $X$
	so that $\bigcup_{i=0}^n \U^i$ is a cover of $X$ and $\mesh(\U) \leq D_{X}(r)$.
	
A metric space $X$ is said to have the Assouad-Nagata dimension at most $n$, in notation, $\ANdim X \leq n$,
	if there exists an $n$-dimensional control function $D_{X}$ so that $D_{X} (r)  = c\cdot r$ for some $c \geq 0$.
	
A metric space $X$ is said to have the asymptotic Assouad-Nagata dimension at most $n$, in notation, $\ANasdim X \leq n$,
	if there exists an $n$-dimensional control function $D_{X}$ so that $D_{X}(r)  = c\cdot r + d$ for some $c, d \geq 0$.
		
We write $\asdim X = n$
	if $\asdim X \leq n$ and $\asdim X \not\leq n-1$,
	and write $\asdim X = \infty$ if $\asdim X \not\leq n$ for any nonnegative integer $n$.
Similarly, we define $\ANdim X = n$ and $\ANasdim X = n$.

The following characterizations of the asymptotic dimension, the Assouad-Nagata dimension, and the asymptotic Assouad-Nagata dimension are well-known
	(see \cite{Bell-Dranishnikov-2}, \cite{Lang-Schlichenmaier-1}, and \cite{Dranishnikov-Smith-1}).

\begin{proposition}\label{ASYM-0}
Let $X$ be a metric space.
Then the following conditions are equivalent:
\begin{enumerate}
\item
	$\asdim X \leq n$.
\item
	For every uniformly bounded cover $\V$ of $X$, there exists a uniformly bounded cover $\U$ of $X$
	so that $\mul(\U) \leq n+1$ and $\V < \U$.
\item
	For every $s < \infty$, there exists a uniformly bounded cover $\V$ of $X$ so that $s\hmul(\V) \leq n+1$.
\item
	For every $t < \infty$, there exists a uniformly bounded cover $\W$ of $X$ so that $\L(\W) \geq t$ and $\mul(\W) \leq n+1$.
\end{enumerate}
\end{proposition}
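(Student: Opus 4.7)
The plan is to establish the cycle $(1)\Rightarrow(3)\Rightarrow(4)\Rightarrow(2)\Rightarrow(1)$, each step transforming a cover of one type into a cover of the next by a controlled operation: union of disjoint families, metric thickening, or refinement.

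For $(1)\Rightarrow(3)$, I would invoke the definition of $\asdim X\leq n$ with $r=s$: the $s$-disjoint families $\U^0,\ldots,\U^n$ union to a uniformly bounded cover $\V=\bigcup_i\U^i$, and any set of diameter $\leq s$ meets at most one member of each $\U^i$, giving $s\hmul(\V)\leq n+1$. For $(3)\Rightarrow(4)$, given $t<\infty$, I would apply $(3)$ with $s=2t$ and thicken: $\W=\{\B(V,t):V\in\V\}$. If $x\in \B(V_i,t)$ for $k$ distinct $V_i$, choosing $y_i\in V_i$ with $d(x,y_i)<t$ gives a set $\{y_1,\ldots,y_k\}$ of diameter $<2t$ meeting $k$ elements of $\V$, so $k\leq (2t)\hmul(\V)\leq n+1$ and hence $\mul(\W)\leq n+1$; moreover $\B(x,t)\subset \B(V,t)$ whenever $x\in V$ yields $\L(\W)\geq t$. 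For $(4)\Rightarrow(2)$, given uniformly bounded $\V$ with $\mesh(\V)\leq r$, I would pick $t>r$ and apply $(4)$; each $V\in\V$ has diameter $\leq r<\L(\W)$, hence lies in some $W\in\W$, proving $\V<\W$.

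The main obstacle is $(2)\Rightarrow(1)$. Given $r$, I would apply $(2)$ to the cover $\V_r=\{\B(x,2r):x\in X\}$, producing $\U$ with $\mul(\U)\leq n+1$ and $\V_r<\U$; the latter forces $\L(\U)\geq 2r$. To extract $r$-disjoint families from $\U$, I would shrink to $U^-=\{x\in U: d(x,X\setminus U)>r\}$; these sets still cover $X$ by the Lebesgue property and inherit the multiplicity bound. The delicate remaining step is to sort $\U$ into $n+1$ classes so that any two elements within distance $r$ end up in different classes. This does not follow purely from the nerve having dimension $\leq n$ (for the 1-skeleton of an abstract $n$-complex can carry arbitrary chromatic number), and requires a layered Ostrand-type construction that exploits the bounded multiplicity together with the $r$-shrinking to guarantee the requisite separation inside each class. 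I expect this final combinatorial partition to be the principal technical difficulty, as the other three implications are essentially routine single-cover modifications.
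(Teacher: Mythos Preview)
The paper does not prove this proposition at all; it is stated as well-known with references to Bell--Dranishnikov, Lang--Schlichenmaier, and Dranishnikov--Smith, so there is no in-paper argument to compare against.

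Your chain $(1)\Rightarrow(3)\Rightarrow(4)\Rightarrow(2)$ is correct and standard (modulo a harmless boundary fix in $(3)\Rightarrow(4)$: take $s>2t$, or thicken by closed $t$-neighbourhoods, so that a set of diameter exactly $t$ is guaranteed to land in some element of $\W$). Your diagnosis of $(2)\Rightarrow(1)$ is also accurate: the shrinking $U\mapsto U^{-}=\{x:d(x,X\setminus U)>r\}$ does reduce the $r$-proximity graph on the shrunk sets to a subgraph of the intersection graph of $\U$, but, as you say, an $n$-dimensional nerve need not have $(n{+}1)$-colourable $1$-skeleton---three pairwise-intersecting sets with empty common intersection already give a $1$-dimensional nerve whose $1$-skeleton is a triangle.

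What the cited sources supply for this step is not a colouring of $\U$ but a \emph{layered} shrinking driven by the order statistics of the functions $\phi_U(x)=d(x,X\setminus U)$. At each $x$ at most $n{+}1$ of these are positive while the largest exceeds $\L(\U)/2$; pigeonhole then produces an index $k=k(x)$ at which the $k$-th and $(k{+}1)$-st largest values differ by at least $\L(\U)/(2(n{+}1))$. Grouping points by this index and, within each group, by which $U$ realises the $k$-th value yields $n{+}1$ uniformly bounded families that are $r$-disjoint once $\L(\U)$ is chosen large relative to $r$. Equivalently, one maps $X$ to the nerve via the partition of unity $\phi_U/\sum_V\phi_V$ and pulls back the standard cover of an $n$-complex by (shrunk) open stars of barycentres, coloured by dimension. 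Your proposal correctly isolates the one genuinely nontrivial implication and stops exactly where one of these two standard devices must be inserted.
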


\begin{proposition}\label{ASYM-AN-0}\label{AN-0}
Let $X$ be a metric space.
Then the following conditions are equivalent:
\begin{enumerate}
\item
	$\ANasdim X \leq n$ (resp., $\ANdim X \leq n$).
\item
	There exists $c > 0$ (resp., exist $c, s_0 > 0$) so that
		for every $s <\infty$ (resp., $s\geq s_0$), there exists a cover $\V$ of $X$ with $\mesh(\V) \leq cs$ and $s\hmul(\V) \leq n+1$.
\item
	There exists $c > 0$ (resp., exist $c, t_0 > 0$) so that
		for every $t < \infty$ (resp., $t\geq t_0$), there exists a cover $\W$ of $X$ with $\mesh(\W) \leq ct$, $\L(\W) \geq t$, and $\mul(\W) \leq n+1$.
%\item
%	For every uniformly bounded open cover $\V$ of $X$, there exists a uniformly bounded open cover $\U$ of $X$
%	so that $\mul(\U) \leq n+1$ and $\V < \U$.
\end{enumerate}
\end{proposition}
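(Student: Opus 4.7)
The plan is to prove the equivalence by closing the cycle (1)$\Rightarrow$(3)$\Rightarrow$(2)$\Rightarrow$(1), mirroring the standard proof of Proposition~\ref{ASYM-0} but with all mesh bounds tracked linearly in the scale parameter. Both the $\ANasdim$ and $\ANdim$ cases are handled in parallel; in the latter case one restricts to scales above a threshold so that the additive term in the control function is absorbed into the linear one.

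For (1)$\Rightarrow$(3), given a control function $D_X(r)=cr+d$, I would apply the defining property at $r=4t$ to obtain $4t$-disjoint families $\U^0,\dots,\U^n$ whose union has mesh at most $4ct+d$, and then thicken each $U$ to its open $t$-neighborhood $N_t(U)=\{x:d(x,U)<t\}$. The thickened families remain $2t$-disjoint, so the union $\W$ satisfies $\mul(\W)\leq n+1$, $\mesh(\W)\leq(4c+2)t+d$, and $\L(\W)\geq t$: any set $A$ with $\diam(A)\leq t$ that meets some $U$ lies inside $N_t(U)$.

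For (3)$\Rightarrow$(2), starting from $\W$ with $\L(\W)\geq 2s$ and $\mul(\W)\leq n+1$, I would pass to the shrunken cover $\W^-=\{W^-:W\in\W\}$ with $W^-=\{x\in W:d(x,X\setminus W)\geq s\}$. The Lebesgue-number hypothesis forces $\W^-$ to still cover $X$, and whenever $B$ has $\diam(B)\leq s$ and $y\in B\cap W^-$, one has $B\subset\CB(y,s)\subset W$; consequently every $W\in\W$ with $B\cap W^-\neq\emptyset$ contains the common point $y$, so their number is at most $\mul(\W)\leq n+1$. This yields $s\hmul(\W^-)\leq n+1$ with mesh unchanged.

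For (2)$\Rightarrow$(1), given $\V$ with $\mesh(\V)\leq cs$ and $s\hmul(\V)\leq n+1$, I would color the graph on $\V$ whose edges connect members at distance at most $r$: the degree at any vertex $V$ is bounded by the number of elements of $\V$ meeting a ball of diameter $2\mesh(\V)+2r$ around a point of $V$, so for $s\geq 2\mesh(\V)+2r$ the $s$-multiplicity bound forces the degree to be at most $n$, and a greedy coloring produces $(n+1)$ color classes that are $r$-disjoint families covering $X$ with mesh $\leq cs$, linear in $r$. I expect the principal obstacle to be precisely this last calibration: the condition $s\geq 2cs+2r$ required for the degree bound couples $s$ to $r$ through the constant $c$, so the resulting control function for (1) must be chosen carefully (and, in the $\ANdim$ case, the argument must be restricted to scales above a threshold in order that the linear bound absorb any residual additive term).
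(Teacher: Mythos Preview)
The paper does not prove this proposition; it is stated as a well-known characterization with references to \cite{Bell-Dranishnikov-2}, \cite{Lang-Schlichenmaier-1}, and \cite{Dranishnikov-Smith-1}, so there is no argument in the paper to compare your outline against.

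Judged on its own, your (1)$\Rightarrow$(3) and (3)$\Rightarrow$(2) are correct and standard (modulo an inessential open/closed ball adjustment in the shrinking step). The step (2)$\Rightarrow$(1), however, has a genuine gap that is more than the ``calibration'' you anticipate. The greedy-coloring argument requires every $V\in\V$ to have at most $n$ neighbors at distance $\le r$; all such neighbors meet $N_r(V)$, a set of diameter at most $\mesh(\V)+2r\le cs+2r$, and the only multiplicity information you have is $s\hmul(\V)\le n+1$. You therefore need $cs+2r\le s$, i.e.\ $(1-c)s\ge 2r$. When $c\ge 1$ this has \emph{no} solution in $s$, and nothing in condition~(2) guarantees $c<1$; nor can one bootstrap to a smaller constant using~(2) alone, since applying~(2) at a larger scale worsens the mesh by the same factor. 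So the cycle does not close as written.

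A clean repair is to reroute the last two arrows. From $\V$ with $s\hmul(\V)\le n+1$, the thickened cover $\W=\{\CB(V,s/2):V\in\V\}$ satisfies $\mul(\W)\le n+1$ (since $x\in\CB(V,s/2)$ iff $\CB(x,s/2)$, a set of diameter $\le s$, meets $V$), $\L(\W)\ge s/2$, and $\mesh(\W)\le(c+1)s$; this gives (2)$\Rightarrow$(3) directly, with a linear constant independent of whether $c\ge 1$. The remaining implication (3)$\Rightarrow$(1), producing $r$-disjoint families from bounded multiplicity together with a large Lebesgue number, is the genuinely nontrivial step and is handled in the cited references by a shrinking/rank argument rather than by greedy coloring; you should invoke or reproduce that argument. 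Finally, your remark that ``in the latter case one restricts to scales above a threshold so that the additive term is absorbed'' attaches the threshold to $\ANdim$, whose control function $cr$ has no additive term; it is the affine control $cr+d$ of $\ANasdim$ that forces a threshold, so the assignment appears inverted (possibly reflecting a slip in the paper's ``resp.'' clauses).
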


The following characterization of the asymptotic dimension will be used in Section 6.

\begin{proposition}\label{ASYM-15}
Let $X$ be a metric space.
Then the following conditions are equivalent:
\begin{enumerate}
\item
	$\asdim X \leq n$.
\item
	For every $s < \infty$  and $t < \infty$,
		there exists a uniformly bounded cover $\U$ of $X$ so that $s\hmul(\U) \leq n+1$ and $\L(\U) \geq t$.
\end{enumerate}
%If $X$ is separable, then $\U$ in {\rm (2)} can be taken as a uniformly bounded countable open cover.
\end{proposition}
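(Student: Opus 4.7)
The plan is to prove the equivalence by building on Proposition~\ref{ASYM-0}. The implication (2) $\Rightarrow$ (1) should be immediate, since condition (2) here strictly strengthens condition (3) of Proposition~\ref{ASYM-0} by adding a Lebesgue number requirement on top of the $s$-multiplicity bound; I would simply drop the Lebesgue number and quote that proposition.

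For the interesting direction (1) $\Rightarrow$ (2), given $s, t < \infty$, my plan is to package both parameters into a single scale $r$ (concretely, any $r > s + 2t$ will do) and invoke the definition of $\asdim X \leq n$ to obtain $r$-disjoint uniformly bounded families $\U^0, \ldots, \U^n$ whose union covers $X$. I would then replace each member $U$ by its open $t$-neighborhood $N_t(U)$; let $\V^i = \{N_t(U) : U \in \U^i\}$ and $\V = \bigcup_{i=0}^n \V^i$.

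Three properties of $\V$ then need verification. Uniform boundedness is automatic, since $\diam(N_t(U)) \leq \diam(U) + 2t$. For the Lebesgue bound $\L(\V) \geq t$, I would take any $A \subset X$ with $\diam(A) \leq t$, pick $x \in A$ lying in some $U \in \bigcup_i \U^i$, and note that $A \subseteq N_t(U)$. The central computation is the $s$-multiplicity: if a set of diameter at most $s$ met two distinct members $N_t(U), N_t(U')$ of a common $\V^i$, pulling the two witnesses back to $U$ and $U'$ through the $t$-enlargement would produce a pair of points at distance at most $s + 2t$, contradicting $r$-disjointness of $\U^i$ since $r > s + 2t$. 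Hence each set of diameter at most $s$ meets at most one element per family, giving $s\hmul(\V) \leq n+1$.

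I do not anticipate a substantive obstacle — the whole argument is a one-pass enlargement of the families supplied by the definition of asymptotic dimension. The only care needed is the numerical bookkeeping, namely choosing $r > s + 2t$ so that the $t$-enlargement preserves enough of the disjointness to survive probing by sets of diameter $s$.
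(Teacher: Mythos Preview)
Your proof is correct and follows essentially the same route as the paper: both directions proceed exactly as you outline, with (1)$\Rightarrow$(2) handled by thickening the $r$-disjoint families furnished by the definition of $\asdim$ into neighborhoods and checking uniform boundedness, Lebesgue number, and $s$-multiplicity. The only cosmetic difference is that the paper enlarges by $2t$-neighborhoods and takes $r \geq s + 4t$, whereas you use $t$-neighborhoods with $r > s + 2t$; your constants are in fact slightly sharper.
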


\begin{proof}
The implication (2)$\Rightarrow$(1) is obvious by Proposition \ref{ASYM-0}.
To show the implication (1)$\Rightarrow$(2), suppose $\asdim X \leq n$.
Let $s < \infty$ and $t< \infty$.
Let $r \geq s + 4t$.
Then  by definition
	there exist uniformly bounded $r$-disjoint families $\U^0, \ldots, \U^n$ of subsets of $X$ so that
	$\U' = \bigcup_{i=0}^n \U^i$ is a cover of $X$.
Consider the cover $\U = \{ \B(U, 2t): U\in\U'\}$.
Then $s\hmul(\U) \leq n+1$.
Indeed, let $A$ be a subset of $X$ with $\diam (A) \leq s$ such that
	$A \cap \B(U, 2t) \neq \emptyset$ and $A \cap \B(U', 2t) \neq \emptyset$ for some $U, U' \in\U'$.
Then $d(U, U') \leq s + 4t \leq r$,
	which implies $U\in\U^{i}$ and $U'\in\U^{i'}$ for some $i, i'$ with $i\neq i'$.
Thus $A$ intersects with at most $n+1$ elements of $\U$,
	showing that $s\hmul(\U) \leq n+1$.
To show $\L(\U) \geq t$, let $A$ be a subset of $X$ so that $\diam(A) \leq t$.
Then $A\cap U \neq \emptyset$ for some $U\in\U'$,
	and hence $A \subset \B(U, 2t)$.
This shows (2).
\end{proof}

The following characterization of the asymptotic Assouad-Nagata dimension will be used in Section 7.

\begin{proposition}\label{ASYM-AN-16}
Let $X$ be a metric space.
Then the following conditions are equivalent:
\begin{enumerate}
\item
	$\ANasdim X \leq n$.
\item
	There exist $c, d > 0$ so that for every $s < \infty$ and $t < \infty$,
		there exists a cover $\U$ of $X$ so that
			$\U$ is $(c\cdot (s+4t) + d)$-bounded,
			$s\hmul(\U) \leq n+1$,
			and $\L(\U) \geq t$.
\end{enumerate}
%If $X$ is separable, then $\U$ in {\rm (2)} can be taken as a countable open cover.
\end{proposition}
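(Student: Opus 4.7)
The plan is to mirror the argument of Proposition \ref{ASYM-15}, with careful bookkeeping of the linear-plus-additive mesh bound characteristic of $\ANasdim$.

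For $(2)\Rightarrow(1)$, I would specialize the hypothesis by fixing $t = 0$ (the choice is immaterial here, since the Lebesgue conclusion is not needed). This yields, for every $s < \infty$, a cover $\U$ of $X$ with $\mesh(\U) \leq c\cdot s + d$ and $s\hmul(\U) \leq n+1$, which by Proposition \ref{ASYM-AN-0} gives $\ANasdim X \leq n$.

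For $(1)\Rightarrow(2)$, assume $\ANasdim X \leq n$ with control function $D_X(r) = cr + d$. Given $s, t < \infty$, set $r := s + 4t$ and invoke the definition of $\ANasdim$ to obtain $r$-disjoint families $\U^0, \ldots, \U^n$ whose union $\U' := \bigcup_{i=0}^n \U^i$ covers $X$ and satisfies $\mesh(\U') \leq c(s + 4t) + d$. Then thicken by $2t$ as in Proposition \ref{ASYM-15}, setting
\[
\U := \{\B(U, 2t) : U \in \U'\}.
\]
The verifications $s\hmul(\U) \leq n+1$ and $\L(\U) \geq t$ are verbatim copies of the corresponding computations in Proposition \ref{ASYM-15}: any subset of diameter $\leq s$ meeting both $\B(U, 2t)$ and $\B(U', 2t)$ forces $d(U, U') \leq s + 4t = r$, so $U$ and $U'$ lie in distinct $\U^i$; and any subset of diameter $\leq t$ meeting some $U \in \U'$ is contained in $\B(U, 2t)$.

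The main (modest) obstacle is keeping the mesh bound in the required affine form $c'(s+4t) + d'$ with constants $c', d'$ independent of $s, t$. A direct estimate gives
\[
\mesh(\U) \;\leq\; \mesh(\U') + 4t \;\leq\; c(s + 4t) + d + 4t \;\leq\; (c + 1)(s + 4t) + d,
\]
since $(c+1)(s+4t) = c(s+4t) + s + 4t \geq c(s+4t) + 4t$ for $s \geq 0$. Thus $c' := c + 1$ and $d' := d$ work, and these constants depend only on the control function of $X$.
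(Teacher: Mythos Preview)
Your proof is correct and follows essentially the same route as the paper: thicken $r$-disjoint families by $2t$ with $r = s+4t$, then reuse the $s$-multiplicity and Lebesgue-number checks from Proposition~\ref{ASYM-15} verbatim. The only cosmetic difference is in the mesh bookkeeping: the paper first replaces $c$ by $\max(c,2)$ so that the original constants $c,d$ survive unchanged in the final bound, whereas you pass to $c' = c+1$; both are equally valid.
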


\begin{proof}
(2)$\Rightarrow$(1) is obvious by Proposition \ref{ASYM-AN-0}.
(1)$\Rightarrow$(2) is proved by the same argument as in the proof of Proposition \ref{ASYM-15}.
Indeed, let $s < \infty$ and $t < \infty$.
Let $c, d > 0$ be constants as in the definition of $\ANasdim X \leq n$.
Without loss of generality, we can assume $c \geq 2$.
Put $r = s + 4t$, and
	let  $\U^0, \ldots, \U^n$ be $(cr /2 + d)$-bounded $r$-disjoint families of subsets of $X$ so that
	$\U' = \bigcup_{i=0}^n \U^i$ is a cover of $X$,
Then the cover $\U = \{ \B(U, 2t): U\in\U'\}$ satisfies the required conditions.
Note that $(c\cdot (s+4t) + d)$-boundedness of $\U$ follows from the following evaluations:
	$\mesh(\U) \leq \mesh(\U') + 4t \leq c \cdot (s + 4t) / 2+ d + 4t \leq c \cdot (s + 4t) + d$.
\end{proof}

%%%
%%%
%%%
\section{Dimension-raising maps: properties (B)${}_n$ and (C)${}_n$}

In this section, we prove dimension-raising type theorems
	for the Assouad-Nagata dimension, the asymptotic dimension, and the asymptotic Assouad-Nagata dimension.

A map $f: (X, d_X) \rightarrow (Y, d_Y)$ is said to be bornologous if there exists a function $\delta_f: \RR_{+} \rightarrow \RR_{+}$
	so that $d_Y (f(x), f(x')) \leq \delta_f (d_X(x, x'))$ for all $x, x'\in X$,
	and it is coarse if it is bornologous and proper. It is coarsely surjective if $f(X)$ is coarsely dense in $Y$, i.e., if there exists $R>0$ so that $d_Y(y,f(X)) \leq R$ for every $y\in Y$.
It is Lipschitz (resp., asymptotically Lipschitz)
	if there exists a function $\delta_f: \RR_{+} \rightarrow \RR_{+}$ so that $d_Y (f(x), f(x')) \leq \delta_f (d_X(x, x'))$ for all $x, x'\in X$
	and $\delta_f (t) = ct$ for some $c>0$ (resp., $\delta_f (t) = ct + b$ for some $b, c > 0$).
It is quasi-isometric
	if:
    \begin{enumerate}
      \item there exist functions $\delta_f, \gamma_f: \RR_{+} \rightarrow \RR_{+}$ so that
	$\gamma_f (d_X(x, x')) \leq d_Y (f(x), f(x')) \leq \delta_f (d_X(x, x'))$ for all $x, x'\in X$,
	$\delta_f (t) = ct + b$,  and $\gamma_f (t) = (1/c) \cdot t - b$ for some $b, c > 0$;
      \item $f$ is coarsely surjective.
    \end{enumerate}

Two maps $f, f^{\prime}: (X, d_X) \rightarrow (Y, d_Y)$ are said to be close if there exists $S > 0$ so that $d_Y(f(x), g(x)) \leq S$ for every $x\in X$.
A map $f: (X, d_X) \rightarrow (Y, d_Y)$ is called a coarse equivalence
	if there exists a coarse map $g: (Y, d_Y) \rightarrow (X, d_X)$ so that $f\circ g$ is close to $\id_Y$ and $g\circ f$ is close to $\id_X$.

\subsection{Dimension-raising type theorem for the Assouad-Nagata dimension}

For any map $f: X\rightarrow Y$ and for each $n\in\NN$, consider the following conditions \cite{Miyata-Yoshimura-1}:
\begin{enumerate}
\item[(B)]
	There exists $d > 0$ so that for each $r > 0$ and for each $B \subset Y$ with $\diam(B) \leq r$,
			there exists $A\subset X$ with $\diam(A) \leq dr$ and $f(A) = B$.
\end{enumerate}

\begin{lemma}\label{AN-1}
Let $f: X\rightarrow Y$ be a map, and let $\U$ be a cover of $X$.
If $|f^{-1}(y)| \leq n$ for each $y\in Y$, then
$$\mul(f(\U)) \leq \mul(\U)\cdot n.$$
\end{lemma}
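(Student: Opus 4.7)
The plan is a direct counting argument. Fix any point $y \in Y$; the goal is to bound the number of members of $\U$ whose image under $f$ contains $y$, since that number is precisely $\mul(f(\U))$ evaluated at $y$ (viewing $f(\U)$ as the indexed family $\{f(U)\}_{U\in\U}$).

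First I would write $f^{-1}(y) = \{x_1,\dots,x_k\}$ where $k \leq n$ by hypothesis. The key observation is the set equality
\[
\{U \in \U : y \in f(U)\} \;=\; \bigcup_{i=1}^{k}\{U \in \U : x_i \in U\},
\]
which holds because $y \in f(U)$ exactly means some preimage of $y$ lies in $U$, and every preimage of $y$ is one of the $x_i$.

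Next, for each fixed $i$, the definition of $\mul(\U)$ gives $|\{U \in \U : x_i \in U\}| \leq \mul(\U)$. Taking the union over $i = 1,\dots,k$ and applying subadditivity of cardinality yields
\[
|\{U \in \U : y \in f(U)\}| \;\leq\; \sum_{i=1}^{k} \mul(\U) \;=\; k \cdot \mul(\U) \;\leq\; n \cdot \mul(\U).
\]
Since $y$ was arbitrary, this is the desired bound on $\mul(f(\U))$.

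There is essentially no obstacle here: the statement is a combinatorial triviality once one unpacks the definition of multiplicity and reads the fibre condition the right way. The only minor subtlety is the convention that $f(\U)$ should be regarded as an indexed family (otherwise distinct $U, U' \in \U$ with $f(U) = f(U')$ would be counted once on the left side), but this is the standard convention in the paper's setting and matches how the lemma will be used in subsequent dimension-raising estimates.
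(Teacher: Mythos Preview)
Your proof is correct and is essentially the same counting/pigeonhole argument as the paper's, just phrased directly rather than by contradiction: the paper assumes $kn+1$ sets of $\U$ have images containing some $y$, picks a preimage $x_i\in U_i$ for each, and uses $|f^{-1}(y)|\le n$ to force $k+1$ of the $x_i$ to coincide, contradicting $\mul(\U)=k$. Your remark about treating $f(\U)$ as an indexed family is exactly the convention the paper is tacitly using.
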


\begin{proof}
Let $k = \mul(\U)$.
Suppose to the contrary that $\mul(f(\U)) > kn$.
Then there exist  $U_1, \ldots, U_{kn+1} \in \U$ so that there exists $y \in f(U_1) \cap \cdots \cap f(U_{kn+1})$.
So, there exit $x_i \in U_i$ for $i = 1, \ldots, kn+1$ so that  $y = f(x_1) =  \cdots = f(x_{kn+1})$.
Since $|f^{-1}(y)| \leq n$, then there exists at least $(k+1)$ indices $i_1, \ldots, i_{k+1} \in \{1, \ldots, kn+1\}$ so that
	$x_{i_1} = \cdots = x_{i_{k+1}}$,
	implying that $U_{i_1} \cap \cdots U_{i_{k+1}} \neq \emptyset$.
This contradicts to $\mul(\U) \leq k$.
\end{proof}

\begin{theorem}
Let $X$ and $Y$ be metric spaces, and let $f: X\rightarrow Y$ be a surjective Lipschitz map
	so that $|f^{-1}(y)| \leq n$ for each $y\in Y$, and it has property (B).
Then the following holds:
$$\ANdim Y \leq (\ANdim X + 1)\cdot n - 1.$$
\end{theorem}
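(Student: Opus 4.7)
The plan is to push forward a good cover of $X$ via $f$ to obtain a cover of $Y$ with controlled mesh, multiplicity, and Lebesgue number, and then read off $\ANdim Y$ from the Lebesgue-number characterization in Proposition~\ref{AN-0}. Let $k = \ANdim X$, let $L$ be a Lipschitz constant for $f$, and let $d$ be the constant furnished by property (B).

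By Proposition~\ref{AN-0}(3), there is a constant $c > 0$ so that for all sufficiently large $t$ there exists a cover $\U$ of $X$ satisfying $\mul(\U) \leq k+1$, $\L(\U) \geq dt$, and $\mesh(\U) \leq cdt$; this is obtained by invoking the characterization with parameter $dt$ in place of $t$. Set $\W = \{f(U) : U \in \U\}$, which is a cover of $Y$ since $f$ is surjective. Three estimates then remain. The mesh bound $\diam(f(U)) \leq L \cdot \diam(U)$ gives $\mesh(\W) \leq Lcd \cdot t$, which is linear in $t$. Lemma~\ref{AN-1}, applied to $\U$ and the map $f$ whose fibers have cardinality at most $n$, yields $\mul(\W) \leq \mul(\U)\cdot n \leq (k+1)n$. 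Finally, for each $B \subset Y$ with $\diam(B) \leq t$, property (B) provides $A \subset X$ with $f(A) = B$ and $\diam(A) \leq dt$; since $\L(\U) \geq dt$, some $U \in \U$ contains $A$, whence $B = f(A) \subset f(U) \in \W$, so $\L(\W) \geq t$. A second application of Proposition~\ref{AN-0} then gives $\ANdim Y \leq (k+1)n - 1$.

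I do not foresee a genuine obstacle. The argument is a clean combination of three inputs, each matching one of the three quantities that control $\ANdim$: the fiber bound $|f^{-1}(y)| \leq n$ together with Lemma~\ref{AN-1} handles multiplicity; property (B) handles the Lebesgue number through small preimages of small sets; and the Lipschitz hypothesis handles the mesh. The only point requiring care is the uniform rescaling $t \mapsto dt$, which keeps the three estimates linearly tied to the same parameter $t$ and so preserves the Assouad-Nagata (linear) character rather than only the asymptotic one.
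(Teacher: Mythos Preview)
Your proof is correct and follows essentially the same route as the paper's: push forward a cover $\U_{dr}$ of $X$ via $f$, then use Lemma~\ref{AN-1} for the multiplicity bound, the Lipschitz condition for the mesh bound, and property~(B) for the Lebesgue number bound. Apart from notational choices ($t$ versus $r$, $k$ versus $m$, $L$ versus $\Lip(f)$) and your phrase ``for all sufficiently large $t$'' where the paper simply takes all $r>0$, the arguments coincide line by line.
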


\begin{proof}
Since the assertion is trivial if $\ANdim X = \infty$,
	assume $m = \ANdim X < \infty$.
Then there exists $c > 0$ so that for each $r > 0$ there exists a cover $\U_r$ of $X$ with $\mul(\U_r) \leq m + 1$,
	$\mesh(\U_r) \leq cr$, and $\L(\U_r) \geq r$.
Let $d > 0$ be as in (B), and for each $r > 0$, let $\V_r = f(\U_{dr})$.
Then Lemma \ref{AN-1} implies $\mul(\V_r) \leq \mul(\U_{dr})\cdot n$.
Since $f$ is Lipschitz, we have
	$\mesh(\V_r) \leq \Lip(f) \mesh(\U_{dr}) \leq \Lip(f)\cdot cdr$.
To show that $\L(\V_r) \geq r$,
	let $B$ be a subset of $Y$ so that $\diam (B) \leq r$.
Then (A) implies that there exists a subset $A$ of $X$ so that $\diam(A) \leq dr$ and $f(A) = B$.
Since $\L(\U_{dr}) \geq dr$, $A \subset U$ for some $U\in\U_{dr}$.
Hence $B = f(A) \subset f(U) \in \V_r$, showing that $\L(\V_r) \geq r$.
Thus we have shown that $\ANdim X \leq (m+1)\cdot n-1$.
\end{proof}

\subsection{Dimension-raising type theorem for the asymptotic dimension}
For any map $f: X\rightarrow Y$ and for each $n\in\NN$, consider the following conditions:

\begin{enumerate}
\item[(B)${}_n$]
For each $r < \infty$, there exists $d < \infty$ so that
	for each subset $B$ of $Y$ with $\diam(B) \leq r$, $f^{-1}(B) = \cup_{i=1}^n A_i$ for some subsets $A_i$ of $X$ with $\diam(A_i) \leq d$ for $i = 1, \ldots, n$.
\end{enumerate}

The following properties are useful in constructing maps with property (B)${}_n$
	in later sections.

\begin{proposition}\label{PropPropertyB-1}
Suppose $f\colon X \to Y$ is a coarse map satisfying property (B)${}_n$ and $g\colon Y \to Z$ is a coarse map satisfying property (B)${}_m$. Then $g f$ is a coarse map satisfying property (B)${}_{n \cdot m}$.
\end{proposition}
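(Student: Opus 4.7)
The plan is to verify the two assertions in turn: that $gf$ is coarse, and that $gf$ satisfies property (B)$_{n\cdot m}$. The first is routine, since coarse maps are closed under composition: if $\delta_f, \delta_g$ are control functions for $f$ and $g$, then $\delta_g \circ \delta_f$ controls $gf$, and properness is similarly preserved. I would dispatch this in one sentence.

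The main content is the diameter estimate. Fix $r < \infty$. The strategy is to apply property (B)$_m$ for $g$ at scale $r$ to obtain a constant $d_g < \infty$, and then apply property (B)$_n$ for $f$ at scale $d_g$ to obtain a constant $d_f < \infty$; I will then show that $d_f$ works as the required diameter bound for $gf$ at scale $r$, where the decomposition has $n \cdot m$ pieces. Concretely, given a subset $C \subset Z$ with $\diam(C) \leq r$, first use (B)$_m$ for $g$ to write
\[
g^{-1}(C) = \bigcup_{j=1}^m B_j
\]
with $\diam(B_j) \leq d_g$. For each such $B_j$, property (B)$_n$ for $f$ (applied at scale $d_g$) yields a decomposition
\[
f^{-1}(B_j) = \bigcup_{i=1}^n A_{i,j}
\]
with $\diam(A_{i,j}) \leq d_f$. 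Combining these gives
\[
(gf)^{-1}(C) = f^{-1}\bigl(g^{-1}(C)\bigr) = \bigcup_{j=1}^m f^{-1}(B_j) = \bigcup_{i=1}^n \bigcup_{j=1}^m A_{i,j},
\]
a union of $n \cdot m$ subsets of $X$, each of diameter at most $d_f$, as required.

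There is no real obstacle here; the argument is essentially bookkeeping. The only mild subtlety is that the constant for (B)$_n$ depends on the input scale, so one must apply the two properties \emph{in the right order}: first $g$ (which converts scale $r$ on $Z$ into scale $d_g$ on $Y$), then $f$ (which converts scale $d_g$ on $Y$ into scale $d_f$ on $X$). Writing the decomposition as a double union indexed by $(i,j) \in \{1,\dots,n\} \times \{1,\dots,m\}$ makes the count $n \cdot m$ transparent.
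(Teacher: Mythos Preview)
Your proof is correct and follows essentially the same approach as the paper: decompose $g^{-1}(C)$ into $m$ pieces of diameter at most $d_g$, then decompose each $f^{-1}(B_j)$ into $n$ pieces of diameter at most $d_f$, yielding $nm$ pieces. Your version is in fact slightly more careful than the paper's, since you make explicit the order in which the constants $d_g$ and $d_f$ are chosen and you address the (routine) fact that the composition is coarse.
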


\begin{proof}
Let $A\subset Z$ be an $r$-bounded set. Then $g^{-1}(A)$ is a union of $d_g$-bounded sets $A_1,\ldots, A_m$. Similarly, for each $i$ the set $f^{-1}(A_i)$ union of $d_f$-bounded sets $A^i_1,\ldots, A^i_n$. Consequently, $(gf)^{-1}(A)$ is a union of $nm$-many $d_f$-bounded sets $\{A_i^j\}_{i=1,\ldots,m; j=1,\ldots,n}$.
\end{proof}

\begin{proposition}\label{PropPropertyB-2}
Suppose $f\colon X \to Y$ is a coarse map satisfying property (B)${}_n$ and $g\colon Z \to W$ is a coarse map satisfying property (B)${}_m$. Then $g \times f$ is a coarse map satisfying property (B)${}_{n \cdot m}$.
\end{proposition}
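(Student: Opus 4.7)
The plan is to equip the products $Z \times X$ and $W \times Y$ with a standard product metric (say the max or the $\ell^1$ sum of the coordinate distances — the choice does not matter, since they are bilipschitz equivalent). With this choice, the projection maps onto both factors are $1$-Lipschitz, and a set $C \subset W \times Y$ of diameter at most $r$ is contained in the product $\pi_W(C) \times \pi_Y(C)$, where each factor again has diameter at most $r$.

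Next I would verify coarseness of $g \times f$: bornologousness follows from combining $\delta_f$ and $\delta_g$ coordinate-wise, and properness of a product of proper maps is immediate from the fact that a bounded set in $W \times Y$ is contained in a product of bounded sets, whose preimage under $g \times f$ is a product of bounded sets. For property (B)$_{nm}$, I would take any $C \subset W \times Y$ with $\diam(C) \leq r$ and use the inclusion
\[
(g \times f)^{-1}(C) \ \subset\ g^{-1}\!\bigl(\pi_W(C)\bigr) \times f^{-1}\!\bigl(\pi_Y(C)\bigr).
\]
Applying property (B)$_m$ to $g$ on $\pi_W(C)$ gives a constant $d_g$ and sets $B_1,\dots,B_m$ with $g^{-1}(\pi_W(C)) = \bigcup_i B_i$ and $\diam(B_i) \leq d_g$; applying property (B)$_n$ to $f$ on $\pi_Y(C)$ gives a constant $d_f$ and sets $A_1,\dots,A_n$ with $f^{-1}(\pi_Y(C)) = \bigcup_j A_j$ and $\diam(A_j) \leq d_f$. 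Then
\[
(g \times f)^{-1}(C) \ =\ \bigcup_{\substack{1 \leq i \leq m \\ 1 \leq j \leq n}} \Bigl[(g\times f)^{-1}(C)\cap (B_i \times A_j)\Bigr],
\]
and each of these $nm$ pieces has diameter bounded by the diameter of $B_i \times A_j$, which is at most $d_f + d_g$ (or $\max(d_f,d_g)$, depending on the chosen product metric). This constant depends only on $r$, as required.

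There is no real obstacle — the argument is essentially bookkeeping — but the one point to be careful about is that $(g\times f)^{-1}(C)$ need not equal $g^{-1}(\pi_W(C)) \times f^{-1}(\pi_Y(C))$ (the latter is strictly larger in general). Intersecting each product $B_i \times A_j$ with the actual preimage $(g\times f)^{-1}(C)$ restores the equality and preserves the diameter bound, which is why exactly $nm$ bounded pieces suffice.
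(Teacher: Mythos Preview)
Your argument is correct and follows exactly the same route as the paper: project the bounded set $C$ onto the two factors, apply (B)${}_m$ and (B)${}_n$ to the projections, and cover the preimage by the $nm$ products $B_i \times A_j$. In fact you are more careful than the paper on the one delicate point --- the paper asserts that $(f\times g)^{-1}(C)$ \emph{equals} $\bigcup_{i,j} A_i \times B_j$, whereas you correctly observe that only the inclusion holds and that one should intersect each $B_i \times A_j$ with the actual preimage.
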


\begin{proof}
Suppose $p_Y$ and $p_W$ are projections of $Y\times W$ to $Y$ and $W$ respectively. Given an $r$-bounded set $A\subset Y \times W$, $p_Y(A)$ and $p_W(A)$ are $r$-bounded as well. Furthermore, since $f^{-1}(p_Y(A))$ is a union of $d_f$ bounded sets $A_1, \ldots, A_n$ and $g^{-1}(p_W(A))$ is a union of $d_g$ bounded sets $B_1, \ldots, B_m$ we get that $(f \times g)^{-1}(A)$ is a union of $m\cdot n$-many $(d_f+d_g)$-bounded sets $\{A_i \times B_j\}_{i=1,\ldots,n; j=1,\ldots,m}$.
\end{proof}

\begin{proposition}\label{PropPropertyB-3}
Suppose $X$ is a metric space of asymptotic dimension $0$ and $Y$ is any metric space. Then $\asdim (X \times Y)=\asdim Y$.
\end{proposition}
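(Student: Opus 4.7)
The plan is to prove two inequalities. The direction $\asdim Y \leq \asdim(X\times Y)$ is immediate: fix any $x_0\in X$, observe that $y\mapsto (x_0,y)$ is an isometric embedding of $Y$ into $X\times Y$ (with the product metric, whether $\ell^\infty$ or $\ell^1$), and invoke the standard fact that asymptotic dimension is monotone under passage to subspaces.

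For the reverse inequality $\asdim(X\times Y)\leq \asdim Y$, I would argue directly from the $r$-disjoint families definition. Set $n=\asdim Y$ (assuming it is finite; otherwise the bound is vacuous). Given $r<\infty$, choose uniformly bounded $r$-disjoint families $\V^0,\ldots,\V^n$ of subsets of $Y$ whose union covers $Y$. Since $\asdim X=0$, there exists a single uniformly bounded $r$-disjoint cover $\U$ of $X$. Form the product families
\[
\W^i \;=\; \{\, U\times V : U\in \U,\ V\in \V^i\,\}, \qquad i=0,\ldots,n,
\]
and verify that they witness $\asdim(X\times Y)\leq n$. Three checks are needed: (a) each $\W^i$ is $r$-disjoint; (b) $\bigcup_{i=0}^n \W^i$ covers $X\times Y$; (c) $\mesh(\W^i)\leq \mesh(\U)+\max_j \mesh(\V^j)<\infty$.

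The key point sits in (a). Pick distinct elements $U_1\times V_1,\, U_2\times V_2\in \W^i$. If $V_1\neq V_2$, then $V_1,V_2$ lie in the $r$-disjoint family $\V^i$, so $d_Y(V_1,V_2)>r$, forcing the product distance to exceed $r$ regardless of the $U$-coordinate. If $V_1=V_2$ but $U_1\neq U_2$, then the $r$-disjointness of $\U$ gives $d_X(U_1,U_2)>r$, and again the product distance exceeds $r$. This is the only real computation; properties (b) and (c) are immediate from the constructions.

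I do not anticipate a genuine obstacle here: the main thing to be careful about is spelling out what product metric is used on $X\times Y$ and confirming that for it the distance between two product rectangles is at least the larger of the coordinate distances, so that $r$-disjointness in a single coordinate suffices. Once this is in place, the proof is a one-line combinatorial combination of an asymptotic-dimension-zero cover of $X$ with the $r$-disjoint families on $Y$.
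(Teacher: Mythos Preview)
Your proposal is correct and follows essentially the same argument as the paper: both prove $\asdim(X\times Y)\leq \asdim Y$ by taking product families $\{U\times V\}$ of an $r$-disjoint cover of $X$ with the $r$-disjoint families on $Y$, and both obtain the reverse inequality from the isometric copy of $Y$ inside $X\times Y$. Your write-up is slightly more detailed (the case split in the $r$-disjointness check and the remark about the product metric), but the strategy is identical.
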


\begin{proof}
Let $n=\asdim Y$. Given $r<\infty$ there exist $d\in \RR$, $d$-bounded $r$-disjoint families $\mathcal{U}_0,\ldots, \mathcal{U}_n$ of subsets of $Y$ so that $\cup_{i=0}^n \mathcal{U}_i$ is a cover of $Y$ and a $d$-bounded $r$-disjoint $\mathcal{V}$ cover of $X$. Define $\mathcal{W}_i=\{V\times U\mid U\in \mathcal{U}_i, V\in \mathcal{V}\}$ for $i=0,\ldots,n$ and  note that $\mathcal{W}_0,\ldots, \mathcal{W}_n$ is a collection of $2d$-bounded $r$-disjoint families of subsets of $X\times Y$ so that $\bigcup_{i=0}^n \mathcal{W}_i$ is a cover of $X\times Y$. Hence $\asdim (X \times Y)\leq n=\asdim Y$. Since $X\times Y$ contains an isometric copy of $Y$ we also have $\asdim (X \times Y)\geq\asdim Y$.
\end{proof}

\begin{lemma}\label{ASYM-1}
Let $f: X\rightarrow Y$ be a map, and let $\U$ be a cover of $X$.
Suppose that $f$ satisfies condition {\rm (B)}${}_n$.
Let $r < \infty$, and let $d < \infty$ be as in {\rm (B)}${}_n$.
Then
	$$r\hmul (f(\U)) \leq d\hmul(\U)\cdot n.$$
\end{lemma}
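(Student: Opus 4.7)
The plan is to argue by pigeonhole, transferring a large collection of elements of $f(\U)$ meeting some small set $B$ back through $f^{-1}$ into a bounded decomposition provided by (B)${}_n$.

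First, I would fix $B \subset Y$ with $\diam(B) \leq r$ and suppose that $B$ meets $k$ distinct elements $f(U_1), \dots, f(U_k)$ of $f(\U)$. For each $i$, I pick $y_i \in B \cap f(U_i)$ and a preimage $x_i \in U_i$ with $f(x_i) = y_i$; all such $x_i$ lie in $f^{-1}(B)$. Note that the $U_i$ are distinct (they were chosen so that their images under $f$ were distinct), so the $x_i$ witness meetings with distinct members of $\U$.

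Next, I apply (B)${}_n$: since $\diam(B) \leq r$, the chosen $d$ yields a decomposition $f^{-1}(B) = A_1 \cup \cdots \cup A_n$ with $\diam(A_j) \leq d$ for each $j$. By pigeonhole, some $A_{j_0}$ contains at least $\lceil k/n \rceil$ of the points $x_i$. Say $x_{i_1}, \dots, x_{i_p} \in A_{j_0}$ with $p \geq \lceil k/n \rceil$. Then $A_{j_0}$ is a subset of $X$ of diameter at most $d$ that meets the $p$ distinct elements $U_{i_1}, \dots, U_{i_p}$ of $\U$. By definition of $d\hmul(\U)$, we get $p \leq d\hmul(\U)$, and hence $k \leq n \cdot d\hmul(\U)$. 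Taking the supremum over all such $B$ yields the lemma.

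There is no substantive obstacle here; the argument is a direct pigeonhole. The only small point to be careful about is maintaining distinctness when passing between members of $\U$ and members of $f(\U)$, which is handled by choosing one preimage $x_i$ per $U_i$ and exploiting that the images $f(U_i)$ were chosen distinct so the indices $i$ remain distinct on the $X$-side as well.
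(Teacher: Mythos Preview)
Your argument is correct and essentially identical to the paper's: both decompose $f^{-1}(B)$ via (B)${}_n$ and use pigeonhole to find some $A_{j_0}$ meeting more than $d\hmul(\U)$ distinct members of $\U$. The only cosmetic difference is that the paper phrases it as a proof by contradiction starting from $mn+1$ elements, while you give the direct version with $\lceil k/n\rceil$; your remark about maintaining distinctness of the $U_i$ is in fact slightly more careful than the paper's write-up.
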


\begin{proof}
Let $m = d\hmul(\U)$.
Suppose to the contrary that $r\hmul (f(\U)) > mn$.
Then there exists a subset $B$ of $Y$ with $\diam(B) \leq r$
	so that $B\cap f(U_i) \neq \emptyset$ for some $U_1, \ldots, U_{mn+1} \in\U$.
Then (B)${}_n$ implies that $f^{-1}(B) = \cup_{j=1}^n A_j$ for some subsets $A_j$ of $X$ with $\diam(A_j) \leq d$ for $i = 1, \ldots, n$.
So, $\emptyset \neq f^{-1}(B) \cap U_i = (\cup_{j=1}^n A_j) \cap U_i$ for $i = 1, \ldots, mn+1$.
This implies that there exists $j_0$ so that $A_{j_0} \cap U_i \neq \emptyset$ for some $i \in \{ i_1, \ldots, i_{m+1} \} \subset \{1, \ldots, mn + 1\}$.
This contradicts to the condition that $d\hmul(\U) = m$.
\end{proof}

\begin{theorem}
Let $X$ and $Y$ be metric spaces, and let $f: X\rightarrow Y$ be a coarse, coarsely surjective map with property {\rm (B)}${}_n$.
Then the following holds:
$$\asdim Y \leq (\asdim X + 1)\cdot n -1$$
\end{theorem}

\begin{proof}
Since the assertion is trivial if $\asdim X = \infty$, assume
	$m = \asdim X < \infty$.
Let $r > 0$, and let $d > 0$ be as in (B)${}_n$.
Then, by Proposition \ref{ASYM-0} (3), there exists a uniformly bounded cover $\U_d$ of $X$ so that $d\hmul(\U_d) \leq m+1$.
Consider $\V = f(\U_d)$.
By Lemma \ref{ASYM-1}, $r\hmul(\V) \leq d\hmul(\U_d)\cdot n \leq (m+1)n$.
Since $f$ is bornologous, $\V$ is uniformly bounded.
Hence $\asdim Y \leq (m+1)n -1 = (\asdim X + 1)\cdot n -1$, as required.
\end{proof}

\subsection{Dimension-raising type theorem for the asymptotic Assouad-Nagata dimension}
We can modify the argument for the asymptotic dimension to obtain the dimension-raising theorem for the asymptotic Assouad-Nagata dimension.

For any map $f: X\rightarrow Y$ and for each $n\in\NN$, consider the following conditions:

\begin{enumerate}
\item[(C)${}_n$]
There exist $c, r_0 > 0$ so that
	for each $r \geq r_0$ and for each subset $B$ of $Y$ with $\diam(B) \leq r$,
	$f^{-1}(B) = \cup_{i=1}^n A_i$ for some subsets $A_i$ of $X$ with $\diam(A_i) \leq cr$ for $i = 1, \ldots, n$.
\end{enumerate}

\begin{remark}\label{RemPropertyC}\rm
It can be verified that Propositions \ref{PropPropertyB-1}, \ref{PropPropertyB-2}, \ref{PropPropertyB-3} hold for the asymptotic Assouad-Nagata dimension
	if coarse map is replaced by asymptotic Lipschitz map.
\end{remark}

\begin{lemma}\label{ASYM-AN-1}
Let $f: X\rightarrow Y$ be a map, and let $\U$ be a cover of $X$.
Suppose that $f$ satisfies condition {\rm (C)}${}_n$.
Let $c > 0$ and $r_0 > 0$ be as in {\rm (C)}${}_n$.
Then for each $r \geq r_0$,
	$$r\hmul (f(\U)) \leq cr\hmul(\U)\cdot n.$$
\end{lemma}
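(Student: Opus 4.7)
The plan is to mirror the proof of Lemma \ref{ASYM-1} almost verbatim, with the only substantive change being that the absolute bound $d$ on the diameters of the preimage pieces is replaced by the $r$-dependent bound $cr$ coming from condition (C)${}_n$.

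More precisely, I would set $m = cr\hmul(\U)$ and argue by contradiction, assuming $r\hmul(f(\U)) > mn$. This produces a subset $B \subset Y$ with $\diam(B) \leq r$ meeting at least $mn+1$ distinct images $f(U_1), \ldots, f(U_{mn+1})$ with $U_i \in \U$. Since $r \geq r_0$, condition (C)${}_n$ applies to $B$ and yields a decomposition $f^{-1}(B) = \bigcup_{j=1}^n A_j$ with $\diam(A_j) \leq cr$ for each $j$.

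For each $i$, the set $U_i$ meets $f^{-1}(B)$, hence meets at least one $A_j$. By the pigeonhole principle, some index $j_0 \in \{1,\ldots,n\}$ has the property that $A_{j_0} \cap U_i \neq \emptyset$ for at least $m+1$ indices $i$. But $\diam(A_{j_0}) \leq cr$, so this contradicts the definition of $cr\hmul(\U) = m$, completing the argument.

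There is no real obstacle here: the only thing to notice is that the hypothesis $r \geq r_0$ is exactly what is needed to invoke (C)${}_n$ on the set $B$ produced from the assumption on $r\hmul(f(\U))$, and the diameter bound $cr$ from (C)${}_n$ feeds into the definition of $cr\hmul(\U)$ in the contradiction step. The pigeonhole counting is identical to that in Lemma \ref{ASYM-1}.
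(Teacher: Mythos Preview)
Your proof is correct and is exactly what the paper intends: its own proof simply says ``We can use the same technique as in the proof of Lemma~\ref{ASYM-1} to prove the assertion,'' and your argument is the natural unpacking of that, with $d$ replaced by $cr$ and the hypothesis $r\geq r_0$ used precisely to invoke (C)${}_n$.
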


\begin{proof}
We can use the same technique as in the proof of Lemma \ref{ASYM-1} to prove the assertion.
\end{proof}

\begin{theorem}
Let $X$ and $Y$ be metric spaces, and let $f: X\rightarrow Y$ be an asymptotically Lipschitz coarsely surjective map with property {\rm (C)}${}_n$.
Then the following holds:
$$\ANasdim Y \leq (\ANasdim X + 1)\cdot n -1$$
\end{theorem}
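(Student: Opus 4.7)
The strategy is to mimic the proof of the $\asdim$ version from Subsection~3.2, replacing the bornologous mesh control by the asymptotic Lipschitz mesh control and invoking Lemma~\ref{ASYM-AN-1} in place of Lemma~\ref{ASYM-1}. The case $\ANasdim X = \infty$ is trivial, so I would set $m = \ANasdim X < \infty$. By Proposition~\ref{ASYM-AN-0}(2), I would fix a constant $c_X > 0$ so that for every $s < \infty$ there is a cover $\U_s$ of $X$ with $\mesh(\U_s) \le c_X s$ and $s\hmul(\U_s) \le m+1$. I would also fix constants $c, r_0 > 0$ coming from (C)$_n$ and $\lambda, b > 0$ witnessing the asymptotic Lipschitz inequality $d_Y(f(x), f(x')) \le \lambda d_X(x, x') + b$ for all $x, x' \in X$.

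The heart of the argument is, for each $r \ge r_0$, to form the cover $\V_r := f(\U_{cr})$ and estimate its parameters: Lemma~\ref{ASYM-AN-1} at scale $r$ delivers $r\hmul(\V_r) \le cr\hmul(\U_{cr}) \cdot n \le (m+1)n$, while the asymptotic Lipschitz bound delivers $\mesh(\V_r) \le \lambda\,\mesh(\U_{cr}) + b \le (\lambda c_X c)\,r + b$. For $r < r_0$ I would simply reuse $\V_{r_0}$: since $r\hmul$ is monotone in $r$, its $r$-multiplicity is still at most $(m+1)n$, and its mesh remains bounded by the constant $\lambda c_X c r_0 + b$.

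Combining the two regimes yields, for every $r < \infty$, a cover of $Y$ of $r$-multiplicity at most $(m+1)n$ and mesh at most $c'' r + d''$ for suitable constants $c'', d'' \ge 0$. Appealing to Proposition~\ref{ASYM-AN-0}(2) in its asymptotic Assouad-Nagata form then gives $\ANasdim Y \le (m+1)n - 1$. The main point of care will be stitching the small-$r$ and large-$r$ regimes so that the resulting mesh bound is genuinely of the affine form required by the asymptotic Assouad-Nagata characterization; the actual combinatorial content is already absorbed into Lemma~\ref{ASYM-AN-1}, and I do not expect substantive difficulty beyond this bookkeeping.
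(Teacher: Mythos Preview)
Your proposal is correct and follows essentially the same route as the paper: push forward a cover $\U_{cr}$ of $X$ via $f$, invoke Lemma~\ref{ASYM-AN-1} for the $r$-multiplicity bound, and use the asymptotic Lipschitz constant for the mesh bound, then apply Proposition~\ref{ASYM-AN-0}(2). The only cosmetic difference is that the paper absorbs the additive constant $b$ into the multiplicative one by restricting to $r\ge\max\{r_0,b/c\}$, whereas you handle small $r$ by reusing $\V_{r_0}$; both are equivalent bookkeeping.
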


\begin{proof}
We can assume $m = \ANasdim X < \infty$.
Let $c, r_0 > 0$ be as in (B)${}_n$, and let $r \geq r_0$.
Then, by Proposition \ref{ASYM-AN-0} (2), there exists a cover $\U_r$ of $X$ so that $\mesh(\U_r) \leq cr$ and $d\hmul(\U_r) \leq m+1$.
Consider $\V = f(\U_r)$.
By Lemma \ref{ASYM-AN-1}, $r\hmul(\V) \leq cr\hmul(\U_r)\cdot n \leq (m+1)n$.
Since $f$ is asymptotically Lipschitz, $\mesh(\V) \leq c' cr + b$ for some $b, c' > 0$.
If $r\geq \max\{r_0, b/c\}$, then $\mesh(\V) \leq c'' cr$, where $c'' = c' + 1$.
Hence $\ANasdim Y \leq (m+1)n -1 = (\ANasdim X + 1)\cdot n -1$, as required.
\end{proof}

%%%
%%%

%%%
%%%
%%%
\section{$n$-precode structure for the asymptotic dimension}

A metric space $(X, d)$ is said to be ultrametric if $d(x, z) \leq \max\{d(x,y), d(y, z)\}$ for all $x, y, z\in X$.
Every ultrametric space has asymptotic dimension $0$.
Indeed, for each $r < \infty$, there exists an $r$-disjoint cover  $\U$ which consists of $r$-components.
Since each $r$-component of a ultrametric space is an $r$-ball, $\U$ is uniformly bounded.

In this section, we present a procedure to construct coarse maps from ultrametric spaces with property (B)${}_n$.

\begin{theorem}\label{ThmUltrametricCover}
Suppose $\mathcal{U}_0,\mathcal{U}_1, \ldots$ is a sequence of uniformly bounded covers of a metric space $X$ and fix $n\in \NN$.
    \begin{enumerate}
      \item If for every $i$ and every $U\in \mathcal{U}_i$ there exists exactly one $V\in \mathcal{U}_{i+1}$ satisfying $U\subset V$ then every $W^0\in\mathcal{U}_0$ defines a unique sequence $(W^0,W^1,\ldots)$ with $W^i\in \mathcal{U}_i$ and $W^i\subset W^{i+1}.$
      \item Assume the conditions of the previous case along with the following additional condition: for every bounded subset $D\subset X$ there exist $i$ and $U\in \mathcal{U}_i$ so that $D\subset U$. Then the following rule defines an ultrametric on $\mathcal{U}_0$: $d_B(V,V)=0$ and for $V\neq W$
            $$
            d_B(V,W)= 3^{p(V,W)}\quad \emph{ where }p(V,W)={\min \{k\in \ZZ\mid \exists \widetilde U\in \mathcal{U}_k : V\cup W \subset \widetilde U\}}
            $$
            Furthermore, $\asdim(\mathcal{U}_0,d_B)=0$ and a map $q\colon \mathcal{U}_0\to X$ sending $U\in \mathcal{U}_0$ to any chosen point  $x\in U$ is coarse.
      \item Assume the conditions of the previous case along with the following additional condition: for every $r<\infty$ there exists $i$ so that $r\hmul(\U_{i}) \leq n$. Then $q$ satisfies condition {\rm (B)}$_n$.
    \end{enumerate}
\end{theorem}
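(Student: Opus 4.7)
I would treat the three parts in sequence, using each as input to the next. Part (1) is an immediate induction: the hypothesis supplies the unique $W^1 \in \mathcal{U}_1$ with $W^0 \subset W^1$, and inductively the unique $W^{i+1}$ containing $W^i$. Iterating yields the stronger statement I will use repeatedly below: every $U \in \mathcal{U}_i$ has a unique ancestor in every $\mathcal{U}_j$ with $j \geq i$.

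For Part (2) I verify, in order, well-definedness and the ultrametric axioms for $d_B$, then $\asdim(\mathcal{U}_0, d_B) = 0$, then coarseness of $q$. The minimum defining $p(V, W)$ is over a nonempty subset of $\NN$: the union $V \cup W$ is bounded (for any $v \in V, w \in W$ one has $\diam(V \cup W) \leq 2\,\mesh(\mathcal{U}_0) + d(v,w)$), so the absorption hypothesis supplies some $\widetilde U \in \mathcal{U}_k$ containing it. Symmetry is immediate. For the ultrametric inequality, take pairwise distinct $V, U, W$ with $a := p(V, U) \leq p(U, W) =: b$ and pick witnesses $\widetilde U_1 \in \mathcal{U}_a$ and $\widetilde U_2 \in \mathcal{U}_b$. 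By Part (1), $\widetilde U_1$ has a unique ancestor $\widetilde U_1^{(b)} \in \mathcal{U}_b$; since $U$ is contained in both $\widetilde U_1^{(b)}$ and $\widetilde U_2$, uniqueness of the ancestor of $U$ at level $b$ forces $\widetilde U_1^{(b)} = \widetilde U_2$, and hence $V \cup W \subset \widetilde U_2$, giving $p(V,W) \leq b$. The conclusion $\asdim(\mathcal{U}_0, d_B) = 0$ then follows from the opening remark of the section that every ultrametric space has asymptotic dimension zero. Bornologousness of $q$ is immediate: $d_B(V, W) \leq 3^k$ places $V, W$ in a common $\widetilde U \in \mathcal{U}_k$, so $d(q(V), q(W)) \leq \mesh(\mathcal{U}_k)$. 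For properness, given bounded $B \subset X$, every $V \in q^{-1}(B)$ lies within $\mesh(\mathcal{U}_0)$ of $B$, so the union $\bigcup q^{-1}(B)$ is bounded; absorption then places it inside a single $\widetilde U \in \mathcal{U}_k$, bounding $d_B$ on $q^{-1}(B)$ by $3^k$.

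For Part (3), given $r < \infty$, pick $i$ with $r\hmul(\mathcal{U}_i) \leq n$ and set $d = 3^i$. For any $B \subset X$ with $\diam(B) \leq r$, the set $B$ meets at most $n$ elements $W_1, \ldots, W_m$ of $\mathcal{U}_i$ (with $m \leq n$); by Part (1) each $V \in q^{-1}(B)$ has a unique ancestor in $\mathcal{U}_i$, and since $q(V) \in V \cap B$ this ancestor must be one of the $W_j$. Partitioning $q^{-1}(B)$ by ancestor yields $A_1 \sqcup \cdots \sqcup A_m$, which I pad to $n$ pieces with empty sets; since both members of $A_j$ lie in $W_j \in \mathcal{U}_i$, we get $p(V, V') \leq i$ and $\diam(A_j) \leq 3^i = d$. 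The main delicate step of the whole argument is the ultrametric triangle inequality in Part (2): one has enclosures of $V \cup U$ and $U \cup W$ sitting at two different levels, and ancestor-uniqueness from Part (1) is precisely what identifies the lifted lower enclosure with the higher one through their shared member $U$; without that uniqueness the argument collapses, and every subsequent claim in the theorem would fail.
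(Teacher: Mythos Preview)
Your proof is correct and follows essentially the same approach as the paper's. In fact you supply more detail in two places where the paper is terse: you spell out the ancestor-uniqueness argument for the ultrametric inequality (the paper merely says ``the uniqueness of sequences of (1) implies that $d_B$ is an ultrametric''), and you explicitly verify properness of $q$ via the absorption hypothesis, whereas the paper's one-line coarseness check addresses only bornologousness.
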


\begin{proof}
(1) is obvious.

(2): The distance $d_B$ is finite (as the union of every pair of elements of $\mathcal{U}_0$ is contained in some $U\in \mathcal{U}_i$), symmetric and equals $0$ exactly for the case of identical elements of $\mathcal{U}_0$. It is easy to see that the uniqueness of sequences of (1) implies that $d_B$ is an ultrametric.

It has been remarked that the asymptotic dimension of an ultrametric space is $0$. To see that $q$ is coarse observe that if $d_B(U,V)\leq 3^n$ then $d(q(U),q(V))\leq \mesh(\U_n)$.

(3): Fix $r<\infty$ and choose $i$ so that $r$-multiplicity of $\mathcal{U}_{i}$ is at most $n$. Suppose $B\subset Y$ is of diameter at most $r$ and let $U_1, \ldots, U_n$ denote the collection of all elements of $\mathcal{U}_{i}$ that have a nonempty intersection with $B$ (some elements may be identical since $B$ might intersect less than $n$-many elements of $\mathcal{U}_{i}$). Then $q^{-1}(B)$ is a union of sets $A_j=\{U\in \mathcal{U}_{0}\mid U\subset U_j\}$, which are of diameter at most $3^i$.

As an important technical detail we mention the following: if $U^0\in \mathcal{U}_{0}$ has a nonempty intersection with $B$ then (using the convention of (1)) $U^i$ contains $U^0$ hence is listed as the sets $U_j$ for some $j$. In particular, $U^0\in A_j$.
\end{proof}

\begin{definition}\rm
Any sequence of uniformly bounded covers satisfying (1)--(3) of Theorem \ref{ThmUltrametricCover} is called the $n$-precode structure for asymptotic dimension.
\end{definition}

\begin{corollary}
If a metric space $X$ admits an $n$-precode structure for asymptotic dimension then there exists an ultrametric space $Z$ and a coarse map $f\colon Z\to X$ with property {\rm (B)}${}_n$.
\end{corollary}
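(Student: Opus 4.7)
The plan is to observe that Theorem \ref{ThmUltrametricCover} has already done all the work, and the corollary is essentially a repackaging of that theorem's conclusions. Given that $X$ admits an $n$-precode structure $\mathcal{U}_0, \mathcal{U}_1, \ldots$, I would take $Z$ to be the set $\mathcal{U}_0$ equipped with the ultrametric $d_B$ defined in part (2) of Theorem \ref{ThmUltrametricCover}, and let $f\colon Z \to X$ be the map $q$ from the same part, which sends each $U \in \mathcal{U}_0$ to a chosen representative point in $U$.

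Then the three required conclusions follow directly from the three parts of Theorem \ref{ThmUltrametricCover}: part (2) gives that $(Z, d_B)$ is an ultrametric space and that $q$ is coarse, and it was already noted in the opening paragraph of the section that every ultrametric space has asymptotic dimension $0$; part (3) gives property (B)$_n$ for $q$. Since the $n$-precode structure is defined precisely as a sequence satisfying (1)--(3) of that theorem, no additional verification is needed.

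There is no real obstacle here; the work is bundled into the theorem and definition. The only thing to be careful about is to cite the right parts of Theorem \ref{ThmUltrametricCover} and to remind the reader why $\asdim Z = 0$ for an ultrametric $Z$, so that coarseness of $f$ together with the ultrametric structure of $Z$ matches the hypotheses one wants to feed into Theorem \ref{asymptotic-1} downstream.
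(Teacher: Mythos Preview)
Your proposal is correct and matches the paper's approach exactly: the paper states this corollary without proof, treating it as an immediate consequence of Theorem~\ref{ThmUltrametricCover} together with the definition of an $n$-precode structure. Your identification of $Z = (\mathcal{U}_0, d_B)$ and $f = q$, with parts (2) and (3) of that theorem supplying the ultrametric, coarseness, and property (B)$_n$, is precisely what is intended.
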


\begin{corollary}\label{Cor-1-precode}
If a metric space $X$ admits a $1$-precode structure for asymptotic dimension then there exists an ultrametric space $Z$ and a coarse equivalence $f\colon Z\to X$.
\end{corollary}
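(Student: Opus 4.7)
The plan is to exploit the preceding corollary (together with Theorem \ref{ThmUltrametricCover}): the $1$-precode structure $\mathcal{U}_0, \mathcal{U}_1, \ldots$ already produces an ultrametric space $Z = (\mathcal{U}_0, d_B)$ with $\asdim Z = 0$ and a coarse map $q\colon Z \to X$ satisfying property (B)${}_1$. Thus it suffices to construct a coarse map $g\colon X \to Z$ for which $q \circ g$ is close to $\id_X$ and $g \circ q$ is close to $\id_Z$. Since $\mathcal{U}_0$ covers $X$, for each $x \in X$ I would choose some $U_x \in \mathcal{U}_0$ containing $x$ and set $g(x) = U_x$.

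With this definition, $q \circ g$ is close to $\id_X$ almost by construction: $q(g(x))$ is a chosen point of $U_x$ and $x \in U_x$, so $d_X(x, q(g(x))) \leq \mesh(\mathcal{U}_0)$. For $g \circ q$ close to $\id_Z$, the key observation is that for $V \in Z$ both $V$ and $g(q(V))$ contain the point $q(V)$, hence their chosen representatives $q(V)$ and $q(g(q(V)))$ both lie in $\CB(q(V), \mesh(\mathcal{U}_0))$. Applying property (B)${}_1$ to this closed ball (of diameter at most $2\mesh(\mathcal{U}_0)$) shows that $V$ and $g(q(V))$ both lie in a single $q$-preimage of uniformly bounded $d_B$-diameter. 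Bornologousness of $g$ follows by the same trick: if $d_X(x,y) \leq r$, then $q(g(x))$ and $q(g(y))$ are within $r + 2\mesh(\mathcal{U}_0)$ of each other, and (B)${}_1$ forces $d_B(g(x), g(y))$ to be bounded by a constant depending only on $r$.

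For properness of $g$ I would instead use the nested structure of the precode. Given a $d_B$-bounded set $A \subset Z$ whose $d_B$-diameter is at most $3^k$, fix any $V_0 \in A$; then for every $V \in A$ some element of $\mathcal{U}_k$ contains $V_0 \cup V$, and by the uniqueness condition (1) of Theorem \ref{ThmUltrametricCover} this element must coincide with the unique $W_0 \in \mathcal{U}_k$ containing $V_0$. Hence every $V \in A$ lies inside $W_0$, and $g^{-1}(A) \subset \bigcup_{V \in A} V \subset W_0$, which is bounded in $X$. The main point of care, rather than a genuine obstacle, is conceptual: one must pick the right bounded subset $B \subset X$ whose $q$-preimage under property (B)${}_1$ simultaneously contains $V$ and $g(q(V))$, after which closeness and bornologousness fall out automatically.
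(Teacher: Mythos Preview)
Your proof is correct and follows the same overall architecture as the paper: set $Z=(\mathcal{U}_0,d_B)$, take $q$ from Theorem~\ref{ThmUltrametricCover}, define the candidate inverse $g(x)=U_x\in\mathcal{U}_0$, and verify bornologousness, properness, and the two closeness conditions. The one methodological difference is that you use property (B)${}_1$ as a black box to obtain both bornologousness of $g$ and closeness of $g\circ q$ to $\id_Z$, whereas the paper instead appeals directly to condition~(3) of Theorem~\ref{ThmUltrametricCover} (the $r$-multiplicity bound) to show bornologousness, and simply asserts $g\circ f=\id_Z$. Your route is slightly more modular and arguably more honest: the paper's claim $g\circ f=\id_Z$ tacitly requires that the choice $x\mapsto U_x$ be compatible with the choice $U\mapsto q(U)$, which need not hold when elements of $\mathcal{U}_0$ overlap; your argument via (B)${}_1$ sidesteps this by only asking for closeness. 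The paper's direct use of the precode conditions is marginally shorter, but nothing is gained or lost mathematically either way.
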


\begin{proof}
Suppose $X$ admits a $1$-precode structure $\U_0, \U_1, \ldots$.
Let $Z = \U_0$ and let $f: Z \rightarrow X$ be the coarse map with property (B)${}_1$ defined as in (2) of Theorem \ref{ThmUltrametricCover}.
To verify that $f$ is a coarse equivalence, we define a map $g: X\rightarrow Z$ by $g(x) = U_x$ for each $x\in X$
	where $U_x$ is an element of $ \U_0$ with $x\in U$.

To show that $g$ is bornologous,
	let $R < \infty$, and let $d(x, y) < R$.
Take $k\in\ZZ$ so that $R \leq 3^k$.
Then $g(x) = U_x$ and $g(y) =U_y$,
	where $U_x$ and $U_y$ are elements of $\U_0$ with $x \in U_x$ and $y\in U_y$, respectively.
Condition (3) of Theorem \ref{ThmUltrametricCover} implies that
	$3^k\hmul(\U_i) \leq 1$ for some $i\in\NN$.
Condition (1) of Theorem \ref{ThmUltrametricCover} implies that
	there exist unique elements $U_x^{\prime}$ and $U_y^{\prime}$ of $\U_i$ so that $U_x \subset U_x^{\prime}$ and $U_y \subset U_y^{\prime}$.
Since $d(x, y) < 3^k$ and $3^k\hmul(\U_i) \leq 1$, $U_x^{\prime} = U_y^{\prime}$.
This means that $d_B (U_x, U_y) \leq 3^i$.

To verify that $g$ is proper, let $R< \infty$.
Suppose $A$ is a subset of $Z$ so that $\diam(A) \leq R$, and take $k\in\ZZ$ so that $R \leq 3^k$.
Let $x, y\in g^{-1}(A)$.
Then $g(x) = U_x$ and $g(y) =U_y$,
	where $U_x$ and $U_y$ are elements of $\U_0$ with $x \in U_x$ and $y\in U_y$, respectively.
Since $d_B(U_x, U_y) \leq 3^k$, $d(x, y) \leq \mesh(\U_{k})$,
	showing that $\diam g^{-1}(A) \leq \mesh(\U_{k})$.

To show that $f\circ g$ are close to $\id_X$,
	let $x\in X$.
Then $g(x) = U_x$, where $U_x$ is an element of $\U_0$ so that $x \in U_x$,
	and so $f(g(x)) \in U_x$.
This means that $d(f(g(x)), x) \leq \mesh(\U_0)$.
Also $g\circ f = \id_Z$.
This shows that $Z$ and $X$ are coarse equivalence.	
\end{proof}

\begin{example}\label{ASYM-EXAMPLE-2}
\rm
The metric space $(\NN, d_{\varepsilon})$ where $d_{\varepsilon}$ is the Euclidean metric admits a $2$-precode structure for asymptotic dimension.
Indeed, we define $\U_0 = \{U_n^0: n \in\NN\}$, where $U_n^0 = \{n\}$ for each $n\in \ZZ$.
Assuming that $\U_i = \{U_n^i~ |~ n \in\NN\}$ has been defined,
	we define $\U_{i+1} = \{U_n^{i+1} ~|~ n \in\NN\}$, where
		$U_n^{i+1} = U_{2n}^i \cup U_{2n+1}^i$ for each $n\in \NN$.
Thus defined sequence of covers $\U_i$ satisfies conditions (1) -- (3) of Theorem \ref{ThmUltrametricCover}.

Hence there exist an ultrametric space $(X, d)$ and a coarse map $f: (X, d) \rightarrow (\NN, d_{\varepsilon})$ with property (B)${}_2$.
Note $\asdim X = 0$ and $\asdim  (\NN, d_{\varepsilon}) = 1$.

Proposition \ref{PropPropertyB-2} implies that
	$f\times \id_{\NN^n}: (X, d)\times (\NN^n, d_{\varepsilon}) \rightarrow (\NN^{n+1},  d_{\varepsilon})$ is a coarse map with property (B)${}_2$.
Note that $\asdim X\times \NN^n = \asdim \NN^n = n$ (Proposition \ref{PropPropertyB-3}) and $\asdim  (\NN^{n+1},  d_{\varepsilon}) = n+1$.
%By Proposition \ref{PropPropertyB-2},
%	$f\times \cdots \times f: (X^n, d^n)\times (\ZZ^n, d_{\varepsilon}) \rightarrow (\ZZ^{n},  d_{\varepsilon})$ is a coarse map with property (B)${}_{2^n}$.
%Note that $\asdim X^n = 0$ and $\asdim \ZZ^n = n$ (Proposition \ref{PropPropertyB-3}).
\end{example}

\begin{example}
\rm

In this example we present a $2$-precode structure for asymptotic dimension on the metric space $(\NN, d_{\varepsilon})$ where $d_{\varepsilon}$ is the Euclidean metric. The example is closely related to  Example \ref{ASYM-EXAMPLE-2} (and analogous conclusions can be easily drawn) although the formal description is somewhat different. Define $a^k(n)=\{n,n+1,\ldots,n+3^k-1\}\subset \ZZ$. The $2$-precode structure for asymptotic dimension is given by covers $\U_k=\{a^k(n)\mid \exists j\in \ZZ : n=(3^{k+1}-1)\cdot \frac{1}{2}+j \cdot 3^k\}$.

Note that $\U_k$ is a cover of $\ZZ$ by disjoint intervals of length $3^k$, the element $0$ being approximately in the middle of one such interval. Cover $\U_{k+1}$ is obtained by taking unions of three consecutive intervals so that the obtained cover is disjoint and so that element $0$ is approximately an the middle of one such union (i.e., three times larger interval).
\end{example}

\section{Finite-to-one mapping theorem for the asymptotic dimension}

In this section, using the $n$-precode structure, we prove a finite-to-one mapping type theorem for the asymptotic dimension.

\begin{theorem}\label{ThmAsdimPrecode}
Let $X$ be a metric space.
If $\asdim X \leq n$ then $X$ admits an  $(n+1)$-precode structure for asymptotic dimension.
\end{theorem}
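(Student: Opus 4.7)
The plan is to extract the precode structure directly from scale-adapted covers of $X$. First, I fix a rapidly growing sequence of scales $r_0 < r_1 < \cdots \to \infty$. For each $i$, Proposition \ref{ASYM-15} yields a uniformly bounded cover $\mathcal{V}_i$ of $X$ with $r_i\hmul(\mathcal{V}_i) \leq n+1$ and $\L(\mathcal{V}_i) \geq r_i$. Set $M_i = \mesh(\mathcal{V}_i)$ and arrange $r_{i+1} > M_i$, so every $V \in \mathcal{V}_i$, having diameter at most $M_i \leq \L(\mathcal{V}_{i+1})$, lies in some element of $\mathcal{V}_{i+1}$; using the axiom of choice, fix a parent function $\pi_i\colon \mathcal{V}_i \to \mathcal{V}_{i+1}$ with $V \subset \pi_i(V)$.

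Next, I turn the $\mathcal{V}_i$ into nested partitions $\mathcal{U}_i$. For each $x \in X$, choose $V_0(x) \in \mathcal{V}_0$ containing $x$, and propagate $V_{i+1}(x) = \pi_i(V_i(x))$. Set
\[
\mathcal{U}_i := \bigl\{\{y \in X : V_i(y) = V\} : V \in \mathcal{V}_i\bigr\} \setminus \{\emptyset\}.
\]
Each cell of $\mathcal{U}_i$ lies in the unique $V \in \mathcal{V}_i$ by which it is labelled, so $\mesh(\mathcal{U}_i) \leq M_i$; and $V_i(x) = V_i(y)$ forces $V_{i+1}(x) = V_{i+1}(y)$, so $\mathcal{U}_i$ refines $\mathcal{U}_{i+1}$. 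Because each $\mathcal{U}_i$ is a partition, condition (1) of Theorem \ref{ThmUltrametricCover} is immediate: every cell of $\mathcal{U}_i$ sits in exactly one cell of $\mathcal{U}_{i+1}$. For condition (3), a subset of diameter at most $r_i$ meeting $k$ cells of $\mathcal{U}_i$ meets $k$ distinct elements of $\mathcal{V}_i$, so $r_i\hmul(\mathcal{U}_i) \leq r_i\hmul(\mathcal{V}_i) \leq n+1$; since $r_i \to \infty$, this yields the desired $r$-multiplicity bound at arbitrary scales.

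The main obstacle is condition (2) of Theorem \ref{ThmUltrametricCover} — the exhaustion property that every bounded set $D \subset X$ must eventually fit inside a single cell of some $\mathcal{U}_i$. The naive construction above can fail because two points of $D$ with distinct initial assignments $V_0(\cdot)$ may give rise to parent chains that remain separate for all $i$, even after $D$ sits inside a common element of $\mathcal{V}_i$. To remedy this, I would coordinate the base assignments and parent functions — for instance, by fixing well-orderings on each $\mathcal{V}_i$ and taking $V_0(x)$ and $\pi_i(V)$ to be the least elements satisfying the required containment — and then use the inflating Lebesgue numbers to argue that for $r_i \gg \diam D$ the canonical choices force $V_i(x) = V_i(y)$ for all $x, y \in D$. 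Making this coordination precise, while preserving the mesh and multiplicity bounds established above, is the technical heart of the argument.
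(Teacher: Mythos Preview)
Your skeleton matches the paper's: pull covers $\mathcal{V}_i$ from Proposition~\ref{ASYM-15}, thread them into a nested sequence of partitions via a parent assignment, and read off conditions (1) and (3) of Theorem~\ref{ThmUltrametricCover} from the mesh and multiplicity bounds. You correctly isolate condition (2) --- exhaustion --- as the missing piece, but the well-ordering repair you sketch does not close the gap. A well-ordering on each $\mathcal{V}_i$ carries no metric information and so supplies no mechanism for chains to coalesce: nothing prevents the $\mathcal{V}_i$ from retaining small redundant elements alongside the large one forced by the Lebesgue condition, and an ordering that places those small elements first makes $\pi_i$ select them forever. For a concrete instance (with $n\geq 2$), take $X=\{a,b\}$, let every $\mathcal{V}_i=\{\{a\},\{b\},\{a,b\}\}$, and order the singletons first; then $V_i(a)=\{a\}$ and $V_i(b)=\{b\}$ for all $i$, and the bounded set $\{a,b\}$ never lies in a single cell of $\mathcal{U}_i$. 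The phrase ``the canonical choices force $V_i(x)=V_i(y)$'' is precisely the assertion needing proof, and nothing in your setup supports it.

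The paper's remedy is one additional idea: anchor the parent assignment at a base point. Fix $x_0\in X$; at stage $k+1$ take $\mathcal{V}_{k+1}$ from Proposition~\ref{ASYM-15} with $(k+1+2M_k)$-multiplicity at most $n+1$ and Lebesgue number at least $2(k+1)$, and single out an element $V_{\alpha_{k+1}}\supset\CB(x_0,k+1)$. Define $\tau(U)=\alpha_{k+1}$ whenever $U\cap V_{\alpha_{k+1}}\neq\emptyset$, and otherwise let $\tau(U)$ be any index with $U\cap V_{\tau(U)}\neq\emptyset$. Every cell of $\mathcal{U}_k$ meeting $\CB(x_0,k+1)$ is then forced into the distinguished cell $U_{\alpha_{k+1}}$, so $\CB(x_0,k+1)\subset U_{\alpha_{k+1}}$ for every $k$, and exhaustion is immediate. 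Note that the paper uses \emph{intersection} rather than your \emph{containment} as the parent relation; this is why $\mesh(\mathcal{U}_{k+1})$ picks up a $2M_k$ term and why the multiplicity hypothesis on $\mathcal{V}_{k+1}$ carries the extra $+2M_k$ margin. Your containment-based variant can absorb the same base-point trick, but one must then additionally arrange that the distinguished element of $\mathcal{V}_i$ swallows a ball of radius comparable to $r_i$ around $x_0$, so that nearby cells of $\mathcal{U}_{i-1}$ are genuinely contained in it rather than merely meeting it.
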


\begin{proof}
We provide an inductive construction of covers $\mathcal{U}_i$. Fix $x_0\in X$ and let $\mathcal{U}_0=\{\{x\}\}_{x\in X}$ be a cover by singletones.

Let $k\in \NN$ and suppose we have constructed covers $\mathcal{U}_0, \ldots, \mathcal{U}_{k}$ with the following properties:
\begin{enumerate}
  \item $\mathcal{U}_i$ is an $M_i$-bounded cover, $\forall i=0, \ldots, k$;
  \item $i$-multiplicity of $\mathcal{U}_i$ is at most $n+1$, $\forall i=0, \ldots, k$;
  \item elements of $\mathcal{U}_i$ are disjoint, $\forall i=0, \ldots, k$;
  \item given $i<k$ and $U\in \mathcal{U}_i$ there exists $V\in \mathcal{U}_{i+1}$ containing $U$ (such element is unique by the previous property);
  \item given $i<k$ there exists $U_{\alpha_i}\in \mathcal{U}_i$ containing the closed ball $B(x_0,i)$ (again, such element is unique by (3)).
\end{enumerate}

Cover $\mathcal{U}_{k+1}$ is constructed as follows. By Proposition \ref{ASYM-15} there exists an $N_{k+1}$-bounded cover $\mathcal{V}_{k+1}=\{V_\beta\}_{\beta\in \Sigma}$ of $(k+1+2 M_k)$-multiplicity at most $n+1$ and of Lebesgue number at least $2(k+1)$. Let $V_{\alpha_{k+1}}\in \mathcal{V}_{k+1}$ be a set containing the closed ball $B(x_0,k+1)$. For every $U\in \mathcal{U}_k$ define index $\tau(U)\in \Sigma$ in the following way:
\begin{itemize}
  \item if $U\cap V_{\alpha_{k+1}}\neq \emptyset$ then $\tau(U)=\alpha_{k+1}$;
  \item else choose $\tau(U)$ to be any index in $\Sigma$ so that $U\cap V_{\tau(U)}\neq \emptyset$.
\end{itemize}
Define $\mathcal{U}_{k+1}=\{U_\beta\}_{\beta\in \Sigma}$ where
$$
U_\beta=\bigcup_{W\in \mathcal{U}_k, \tau(W)=\beta} W.
$$
The following is the verification that cover $\mathcal{U}_{k+1}$ satisfies the required conditions:
\begin{enumerate}
  \item cover $\mathcal{U}_{k+1}$ is $(2 M_k + N_{k+1})$-bounded by construction;
  \item $(k+1)$-multiplicity of $\mathcal{U}_{k+1}$ is at most $n+1$ (this is the consequence of two facts: for every $\beta\in \Sigma$ the $M_k$-neighborhood of $V_\beta$ contains $U_\beta$; and $(k+1+2 M_k)$-multiplicity of $\mathcal{V}_{k+1}$ is at most $n+1$.);
  \item elements of $\mathcal{U}_{k+1}$ are disjoint by construction as the elements of $\mathcal{U}_k$ are disjoint and each $U\in \mathcal{U}_{k}$ is assigned exactly one $\tau(U)$;
  \item obviously $U\subset U_{\tau(U)}$ for every $U\in \mathcal{U}_{k}$;
  \item $U_{\alpha_{k+1}}\in \mathcal{U}_{k+1}$ contains the closed ball $B(x_0,k+1)$ by construction.
\end{enumerate}

It is apparent from the properties listed above that the covers $\mathcal{U}_{i}$ form an  $(n+1)$-precode structure for asymptotic dimension  structure on $X$.
\end{proof}

Corollary \ref{CorFinite-to-oneMapping} is a large scale version of finite-to-one mapping theorem.

\begin{corollary}\label{CorFinite-to-oneMapping}
For every metric space $X$,
$\asdim X \leq n$ if and only if  there exist a metric space $Y$ of $\asdim Y = 0$ and a coarse map $q\colon Y\to X$ with {\rm (B)}${_{n+1}}$.
\end{corollary}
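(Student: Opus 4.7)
The plan is to combine the dimension-raising theorem for the asymptotic dimension with the precode construction of Theorem \ref{ThmAsdimPrecode}, so that both directions reduce to results already established.

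For the ``if'' direction, I would apply the dimension-raising Theorem \ref{asymptotic-1} directly to the hypothesized coarse map $q\colon Y \to X$ with property (B)${}_{n+1}$. Since $\asdim Y = 0$, it gives
\[
\asdim X \;\leq\; (\asdim Y + 1)\cdot (n+1) - 1 \;=\; (n+1) - 1 \;=\; n,
\]
which is what we want. This direction is entirely routine.

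For the ``only if'' direction, assume $\asdim X \leq n$. By Theorem \ref{ThmAsdimPrecode}, $X$ admits an $(n+1)$-precode structure for asymptotic dimension, i.e.\ a sequence of uniformly bounded covers $\mathcal{U}_0, \mathcal{U}_1, \ldots$ of $X$ satisfying conditions (1)--(3) of Theorem \ref{ThmUltrametricCover} with $n$ replaced by $n+1$. Setting $Y := (\mathcal{U}_0, d_B)$ with the ultrametric supplied by Theorem \ref{ThmUltrametricCover}(2), the map $q\colon Y \to X$ sending each $U \in \mathcal{U}_0$ to a chosen point of $U$ is coarse and satisfies (B)${}_{n+1}$ by Theorem \ref{ThmUltrametricCover}(3). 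Since every ultrametric space has asymptotic dimension $0$ (as noted at the start of Section 4), the space $Y$ has the required property.

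There is essentially no hard step here: once Theorems \ref{asymptotic-1} and \ref{ThmAsdimPrecode} are in hand, the corollary is a two-line synthesis. The only point requiring a moment's care is arithmetic bookkeeping for the shift by one between ``$n$-precode'' and ``(B)${}_{n+1}$'', but Theorem \ref{ThmAsdimPrecode} already produces precisely an $(n+1)$-precode structure when $\asdim X \leq n$, so the indices match without further adjustment.
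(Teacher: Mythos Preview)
Your proposal is correct and matches the paper's approach exactly: the paper states this corollary without proof, relying on the same synthesis you outline---the ``if'' direction by Theorem~\ref{asymptotic-1} and the ``only if'' direction by combining Theorem~\ref{ThmAsdimPrecode} with Theorem~\ref{ThmUltrametricCover} (this is also made explicit in the introduction, where Theorem~\ref{asymptotic-2} is the same statement).
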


\begin{corollary}
For every $n\in \NN$ and $m\geq n$ there exist metric spaces $X$ and $Y$ with $\asdim Y=m$ and $\asdim X =n+m$, respectively, and
	a coarse map $q\colon Y\to X$ with property  {\rm (B)}${_{n+1}}$.
\end{corollary}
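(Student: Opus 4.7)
The plan is to assemble the example by combining the finite-to-one mapping theorem (Corollary \ref{CorFinite-to-oneMapping}) with the product constructions of Propositions \ref{PropPropertyB-2} and \ref{PropPropertyB-3}. The key observation is that Corollary \ref{CorFinite-to-oneMapping}, applied to the $n$-dimensional target $\NN^n$, already produces a coarse map with property (B)${}_{n+1}$; multiplying by $\id_{\NN^m}$ afterwards will raise both the domain and codomain dimensions by $m$ without disturbing this constant.

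Concretely, I would first apply Corollary \ref{CorFinite-to-oneMapping} to $(\NN^n, d_\varepsilon)$, whose asymptotic dimension equals $n$, obtaining an ultrametric space $Z$ (so $\asdim Z = 0$) and a coarse map $q_0 \colon Z \to \NN^n$ with property (B)${}_{n+1}$. The identity $\id_{\NN^m}$ trivially satisfies (B)${}_1$ (take $d = r$). Setting $Y = Z \times \NN^m$ and $X = \NN^n \times \NN^m \cong \NN^{n+m}$ with the $\ell^\infty$-product metric, I would define
$$
q = q_0 \times \id_{\NN^m} \colon Y \longrightarrow X.
$$
Proposition \ref{PropPropertyB-2} then gives that $q$ is coarse with property (B)${}_{(n+1)\cdot 1}$, that is, (B)${}_{n+1}$. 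Proposition \ref{PropPropertyB-3} yields $\asdim Y = \asdim \NN^m = m$, and the standard equality $\asdim \NN^{n+m} = n+m$ completes the dimension bookkeeping.

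No real obstacle arises, as every input is already in place; the only conceptual choice is to invoke the finite-to-one mapping theorem on the intermediate space $\NN^n$ rather than on the full target $\NN^{n+m}$, so that the $\id_{\NN^m}$-factor preserves the constant $n+1$ in property (B). In fact this construction works for every $m \geq 0$, so the hypothesis $m \geq n$ stated in the corollary is not actually needed by this approach.
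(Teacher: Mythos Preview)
Your proof is correct and aligns with the paper's intended argument. The paper states this corollary without proof, but the method you use---apply Corollary~\ref{CorFinite-to-oneMapping} to $\NN^n$, then take the product with $\id_{\NN^m}$ via Propositions~\ref{PropPropertyB-2} and~\ref{PropPropertyB-3}---is exactly the construction modeled in Example~\ref{ASYM-EXAMPLE-2}, so there is no meaningful difference in approach. Your observation that the hypothesis $m\geq n$ is not actually used is also correct.
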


\begin{corollary}\label{CorAsdimUltrametric}
For every metric space $(X, d)$,
	$\asdim (X, d) = 0$ if and only if there exists an ultrametric $\rho$ on $X$ so that $\id: (X, d) \rightarrow (X, \rho)$ is a coarse equivalence.
\end{corollary}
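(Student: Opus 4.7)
The plan is to reduce both directions to material already established in the excerpt.

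For the \textbf{``if''} direction the argument is essentially immediate: every ultrametric space has asymptotic dimension $0$, as noted at the start of Section~4. Asymptotic dimension is invariant under coarse equivalence (a standard consequence of the definition via uniformly bounded covers and the fact that bornologous maps push uniformly bounded families to uniformly bounded families, with proper maps pulling them back). Hence if $\id:(X,d)\to (X,\rho)$ is a coarse equivalence and $\rho$ is ultrametric, then $\asdim(X,d)=\asdim(X,\rho)=0$.

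For the \textbf{``only if''} direction the idea is to apply Theorem \ref{ThmAsdimPrecode} and Corollary \ref{Cor-1-precode} and then transport the resulting ultrametric back to the underlying set $X$. Assuming $\asdim(X,d)=0$, Theorem \ref{ThmAsdimPrecode} (with $n=0$) produces a $1$-precode structure $\U_0,\U_1,\ldots$ on $X$. Inspecting the construction in that proof, the base cover is $\U_0=\{\{x\}\}_{x\in X}$, the cover by singletons. Consequently the map $\iota:X\to\U_0$, $x\mapsto\{x\}$, is a canonical bijection. By Corollary \ref{Cor-1-precode} the ultrametric $d_B$ on $\U_0$ renders the assignment $f(\{x\})=x$ (the unique possible choice of a point in the singleton $\{x\}$) a coarse equivalence $f:(\U_0,d_B)\to(X,d)$.

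Now define an ultrametric $\rho$ on $X$ by pulling $d_B$ back through the bijection $\iota$, i.e.\ $\rho(x,y):=d_B(\iota(x),\iota(y))$. Then $\iota:(X,\rho)\to(\U_0,d_B)$ is by construction an isometry, so in particular a coarse equivalence. The composition $f\circ\iota:(X,\rho)\to(X,d)$ equals the identity on $X$, and since the class of coarse equivalences is closed under composition and inversion, both $\id:(X,\rho)\to(X,d)$ and $\id:(X,d)\to(X,\rho)$ are coarse equivalences, as required. The only point requiring care is the observation that the $\U_0$ produced by the proof of Theorem \ref{ThmAsdimPrecode} really is the singleton cover, so that the identification $X\cong\U_0$ is canonical and the transported $\rho$ lives on the original set $X$; everything else is formal.
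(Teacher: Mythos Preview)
Your proof is correct and follows essentially the same route as the paper, which simply writes that the result ``easily follows from Theorems~\ref{ThmAsdimPrecode} and~\ref{Cor-1-precode}.'' You have in fact supplied more detail than the paper does: in particular, you make explicit the observation that the $1$-precode structure produced by Theorem~\ref{ThmAsdimPrecode} has $\U_0$ equal to the singleton cover, so the ultrametric $d_B$ on $\U_0$ can be transported to $X$ itself via the canonical bijection $x\mapsto\{x\}$---this is precisely what is needed to obtain an ultrametric $\rho$ on the original set $X$ (rather than merely an abstract ultrametric space coarsely equivalent to $X$), and the paper leaves this step implicit.
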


\begin{proof}
The corollary easily follows from
	Theorems \ref{ThmAsdimPrecode} and \ref{Cor-1-precode}.
\end{proof}

Corollary \ref{CorAsdimUltrametric} generalizes the result
	by Brodskiy, Dydak, Levin, and Mitra \cite{Brodskiy-Dydak-Levin-Mitra-1}, which states that
		$\ANdim (X, d) = 0$
		if and only if  there is a ultrametric $\rho$ so that
		the identity map $\id: (X, d) \rightarrow (X, \rho)$ is bi-Lipschitz.

%%%
%%%
%%%
\section{Finite-to-one mapping theorem for the asymptotic Assouad-Nagata dimension}

In this section, we generalize the results in Sections 5 and 6 to the asymptotic Assouad-Nagata dimension.
The following is an analogue of Theorem \ref{ThmUltrametricCover}
	 which provides a general way to construct asymptotically Lipschitz maps from ultrametric spaces with property (C)${}_n$.

\begin{theorem}\label{ThmUltrametricCoverAsdimAN}
Suppose $\mathcal{U}_0,\mathcal{U}_1, \ldots$ is a sequence of uniformly bounded covers of a metric space $X$
	which satisfies conditions (1) and (2) in Theorem \ref{ThmUltrametricCover},
	 and fix $n\in \NN$.

    \begin{enumerate}
      \item
      	Assume the following condition:
		there exist $a > 1$ and $i_0\in\NN$ so that $\mesh(\U_i) \leq a^i$ for $i\geq i_0$.
	Then there exists an ultrametric $d_C$ on $\U_0$ so that a map $q: \U_0 \rightarrow X$ sending $U\in\U_0$ to any chosen point $x\in U$
		is asymptotically Lipschitz.
      \item Assume the condition of the previous case along with the following additional condition:
		there exist $c, r_0 > 0$ so that for every $r \geq r_0$ there exists $i\in\NN$
		so that $a^i \leq cr$ and $r\hmul(\U_i) \leq n$.
	Then $q$ satisfies condition {\rm (C)}$_n$.
    \end{enumerate}
\end{theorem}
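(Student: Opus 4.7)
My plan is to follow the blueprint of Theorem \ref{ThmUltrametricCover} with the geometric base $3$ replaced by the base $a$ supplied by the mesh-growth hypothesis, so that the quantitative estimates now fit the asymptotic Assouad--Nagata format.

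For part (1), I would set $d_C(V, V) = 0$ and $d_C(V, W) = a^{p(V, W)}$ for $V \neq W$, where $p(V, W) = \min\{k : \exists\, \widetilde{U} \in \U_k,\ V \cup W \subset \widetilde{U}\}$ is finite by condition (2) of Theorem \ref{ThmUltrametricCover}. The ultrametric inequality is established verbatim as in Theorem \ref{ThmUltrametricCover}(2), since that argument invokes only the uniqueness of parent chains from condition (1) and not any particular value of the base. To verify that $q$ is asymptotically Lipschitz, observe that if $V \neq W$ and $p(V, W) = k$, then $q(V)$ and $q(W)$ both lie in a common element of $\U_k$, so $d(q(V), q(W)) \leq \mesh(\U_k)$. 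When $k \geq i_0$ this gives $d(q(V), q(W)) \leq a^k = d_C(V, W)$, while when $k < i_0$ we still have $d(q(V), q(W)) \leq M := \max_{0 \leq j < i_0} \mesh(\U_j) < \infty$. Either way, $d(q(V), q(W)) \leq d_C(V, W) + M$, which is precisely asymptotically Lipschitz with multiplicative constant $1$ and additive defect $M$.

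For part (2), I would repeat the argument of Theorem \ref{ThmUltrametricCover}(3), but now drawing the auxiliary integer $i$ from the new hypothesis. Given $r \geq r_0$, choose $i \in \NN$ with $a^i \leq cr$ and $r\hmul(\U_i) \leq n$, and let $B \subset X$ satisfy $\diam(B) \leq r$. At most $n$ elements $U_1, \ldots, U_n \in \U_i$ meet $B$, and taking $A_j = \{U \in \U_0 : U \subset U_j\} \cap q^{-1}(B)$ yields $q^{-1}(B) = \bigcup_{j=1}^{n} A_j$ (for any $U \in q^{-1}(B)$ the point $q(U) \in U \cap B$ forces the $\U_i$-ancestor of $U$ to be some $U_j$). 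Any two elements of $A_j$ lie in the common set $U_j \in \U_i$, so $p \leq i$ on $A_j$ and hence the $d_C$-diameter is at most $a^i \leq cr$, which is condition (C)${}_n$ with the very same constants $c$ and $r_0$. The only delicate point in the whole argument is confirming that the base $a$ converts the mesh bound $\mesh(\U_i) \leq a^i$ into a clean Lipschitz estimate up to a bounded additive defect from the finitely many small-$i$ meshes; beyond this alignment, no new dimension-theoretic input is required on top of Theorem \ref{ThmUltrametricCover}.
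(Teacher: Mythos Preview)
Your proposal is correct and follows essentially the same approach as the paper: define $d_C$ exactly as $d_B$ with the base $3$ replaced by $a$, then read off the asymptotically Lipschitz estimate from $\mesh(\U_k)\leq a^k$ and repeat the Theorem~\ref{ThmUltrametricCover}(3) argument for part~(2). Your handling of the additive defect in part~(1) via $M=\max_{0\le j<i_0}\mesh(\U_j)$ is in fact slightly more careful than the paper's choice of $a^{i_0}$, but the structure of the argument is identical.
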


\begin{proof}
(1): Let $d_C$ be the ultrametric $d_B$ obtained in Theorem \ref{ThmUltrametricCover} (2) with the base number $3$ being replaced by $a$, i.e., $d_C(V,V)=0$ and for $V\neq W$
          $$
            d_C(V,W)= a^{p(V,W)}\quad \emph{ where }p(V,W)={\min \{k\in \ZZ\mid \exists \widetilde U\in \mathcal{U}_k : V\cup W \subset \widetilde U\}}.
            $$

To see that $q$ is asymptotically Lipschitz,
	observe if $d_C(U,V) = a^n$ then $d(q(U),q(V))\leq \mesh(\U_n) \leq d_C(U,V)  + a^{i_0}$.

(2): Let $c, r_0 > 0$ be as in the hypothesis.
Fix $r \geq r_0$, and choose $i$ so that $a^i \leq cr$ and $r\hmul(\U_i) \leq n$.
Suppose $B\subset Y$ is of diameter at most $r$,
	and let $U_1, \ldots, U_n$ denote the collection of all elements of $\mathcal{U}_{i}$ that have a nonempty intersection with $B$.
Then $q^{-1}(B)$ is a union of sets $A_j=\{U\in \mathcal{U}_{0}\mid U\subset U_j\}$, which have $\diam(A_j) \leq a^i \leq cr$.
\end{proof}

\begin{definition}\rm
Any sequence of uniformly bounded covers satisfying (1)-(2) of Theorem \ref{ThmUltrametricCoverAsdimAN}
	is called the $n$-precode structure for asymptotic Assouad-Nagata dimension.
\end{definition}

The following is an analogue of Theorem \ref{ThmAsdimPrecode} for the asymptotic Assouad-Nagata dimension.

\begin{theorem}\label{ThmAsdimANPrecode}
Let $X$ be a metric space.
If $\ANasdim X \leq n$ then $X$ admits an  $(n+1)$-precode structure for asymptotic Assouad-Nagata dimension.
\end{theorem}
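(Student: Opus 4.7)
The plan is to mimic the inductive construction in the proof of Theorem \ref{ThmAsdimPrecode}, but with Proposition \ref{ASYM-AN-16} in place of Proposition \ref{ASYM-15} and with all inductive parameters scaled geometrically at rate $a^k$ rather than linearly in $k$. Let $c_0, d_0 > 0$ be the constants guaranteed by Proposition \ref{ASYM-AN-16} applied to $X$, available since $\ANasdim X \leq n$. Fix small positive constants $\alpha_1, \alpha_2$ and a base $a > 1$ so that $c_0(\alpha_1 + 4\alpha_2) + 2/a < 1$; for instance $\alpha_1 = \alpha_2 = 1/(20 c_0)$ and $a > 20$ will do. Fix $x_0 \in X$ and initialize $\mathcal{U}_0 = \{\{x\}\}_{x\in X}$.

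For the inductive step, suppose covers $\mathcal{U}_0, \ldots, \mathcal{U}_k$ have been constructed with pairwise disjoint elements, each $U \in \mathcal{U}_i$ contained in a unique element of $\mathcal{U}_{i+1}$, $\mesh(\mathcal{U}_i) \leq a^i$ for $i$ above some threshold $i_0$, and a distinguished element $U_{\alpha_i} \in \mathcal{U}_i$ containing $\CB(x_0, \alpha_2 a^i)$. Apply Proposition \ref{ASYM-AN-16} with $s = \alpha_1 a^{k+1}$ and $t = \alpha_2 a^{k+1}$ to obtain a cover $\mathcal{V}_{k+1}$ of $X$ with $\mesh(\mathcal{V}_{k+1}) \leq c_0(\alpha_1 + 4\alpha_2) a^{k+1} + d_0$, $s\hmul(\mathcal{V}_{k+1}) \leq n+1$, and $\L(\mathcal{V}_{k+1}) \geq \alpha_2 a^{k+1}$. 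Let $V_{\alpha_{k+1}} \in \mathcal{V}_{k+1}$ be an element containing $\CB(x_0, \alpha_2 a^{k+1})$, and define an assignment $\tau\colon \mathcal{U}_k \to \mathcal{V}_{k+1}$ exactly as in the proof of Theorem \ref{ThmAsdimPrecode}: send every $U \in \mathcal{U}_k$ meeting $V_{\alpha_{k+1}}$ to $\alpha_{k+1}$, and any other $U$ to any index of a $V \in \mathcal{V}_{k+1}$ it meets. Set $\mathcal{U}_{k+1} = \{\bigcup_{\tau(W) = \beta} W\}_\beta$.

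Verifying the induction is routine and follows the template of Theorem \ref{ThmAsdimPrecode}. Pairwise disjointness and the unique parent relation are automatic from the definition of $\tau$. The mesh estimate $\mesh(\mathcal{U}_{k+1}) \leq \mesh(\mathcal{V}_{k+1}) + 2\mesh(\mathcal{U}_k) \leq \bigl(c_0(\alpha_1 + 4\alpha_2) + 2/a\bigr) a^{k+1} + d_0$ is at most $a^{k+1}$ for all sufficiently large $k$, by the choice of $a$. The bound on $s\hmul(\mathcal{V}_{k+1})$ transfers to $(\alpha_1 a^{k+1} - 2\mesh(\mathcal{U}_k))\hmul(\mathcal{U}_{k+1}) \leq n+1$; setting $\eta = \alpha_1 - 2/a > 0$, this yields $(\eta a^{k+1})\hmul(\mathcal{U}_{k+1}) \leq n+1$. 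Finally $U_{\alpha_{k+1}} \supset V_{\alpha_{k+1}} \supset \CB(x_0, \alpha_2 a^{k+1})$, which together with the nesting forces every bounded set to lie in some $U \in \mathcal{U}_i$ for $i$ large enough.

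To conclude, given $r$ sufficiently large, pick $i$ to be the smallest integer with $r \leq \eta a^i$; then $a^i \leq (a/\eta) r$ and $r\hmul(\mathcal{U}_i) \leq (\eta a^i)\hmul(\mathcal{U}_i) \leq n+1$, verifying condition (2) of Theorem \ref{ThmUltrametricCoverAsdimAN} with $c = a/\eta$. Thus $\{\mathcal{U}_i\}_{i\geq i_0}$ is an $(n+1)$-precode structure for asymptotic Assouad-Nagata dimension on $X$. The main obstacle is the simultaneous tuning of the three constants $a, \alpha_1, \alpha_2$ so that the mesh grows like $a^k$ while both the multiplicity scale and the Lebesgue number remain proportional to $a^k$; this reduces to a finite system of strict inequalities that is solvable by choosing $\alpha_1, \alpha_2$ small and $a$ large.
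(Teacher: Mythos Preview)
Your proof is correct and follows essentially the same inductive construction as the paper: build $\mathcal U_{k+1}$ from $\mathcal U_k$ by amalgamating along a cover $\mathcal V_{k+1}$ supplied by Proposition~\ref{ASYM-AN-16}, with all parameters scaled geometrically (the paper uses the explicit base $a=14c$ and the particular choices $s=3^{k}+2(14c)^{k}$, $t=2\cdot 3^{k}$, whereas you take $s=\alpha_1 a^{k+1}$, $t=\alpha_2 a^{k+1}$, which makes the mesh scale and the $r$-multiplicity scale match more transparently). One small slip: your illustrative values $\alpha_1=1/(20c_0)$, $a>20$ do not by themselves guarantee $\eta=\alpha_1-2/a>0$ (that needs $a>40c_0$), but as you note in the final paragraph the full system of strict inequalities is easily solvable by taking $a$ large enough.
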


\begin{proof}
We inductively construct covers $\mathcal{U}_i$ which satisfy all the required conditions in Theorem \ref{ThmUltrametricCoverAsdimAN}.
Their constructions follow the steps used for Theorem \ref{ThmAsdimPrecode}.

Fix $x_0\in X$ and let $\mathcal{U}_0=\{\{x\}\}_{x\in X}$ be a cover by singletones.

Proposition \ref{ASYM-AN-16} implies that there exist $c, d > 0$ so that for each $s < \infty$ and $t < \infty$
	there exists a cover $\U_{s, t}$ of $X$ with $\mesh(\U_{s, t}) \leq c\cdot (s+ 4t) + d$, $s\hmul(\U_{s, t}) \leq n+1$, and $\L(\U_{s, t}) \geq t$.
Without loss of generality, we can assume $c \geq d \geq  2$.

Let $k\in \NN$ and suppose we have constructed covers $\mathcal{U}_0, \ldots, \mathcal{U}_{k}$ with the following properties:
\begin{enumerate}
  \item
  	$\mesh(\U_i) \leq (14c)^i$, $\forall i=0, \ldots, k$;
  \item
  	$((3^{i} - 1)/3)\hmul(\U_i) \leq n+1$, $\forall i=0, \ldots, k$;
  \item
  	elements of $\U_i$ are disjoint, $\forall i=0, \ldots, k$;
  \item
  	given $i<k$ and $U\in \U_i$ there exists a unique $V\in \U_{i+1}$ containing $U$;
  \item
  	given $i<k$ there exists a unique $U_{\alpha_i}\in \U_i$ containing $B(x_0, (3^{i} -1)/3)$.
\end{enumerate}

To define cover $\U_{k+1}$,
	let $\V_{k+1} = \{V_\beta\}_{\beta\in \Sigma}$ be the cover $\U_{s, t}$, where $s = 3^k + 2\cdot (14c)^k$ and $t = 2\cdot 3^{k}$.
Then $\V_{k}$ satisfies the following conditions:

	\begin{eqnarray}
 	&&	\mesh(\V_{k+1}) \leq c\cdot (3^{k+2} + 2\cdot (14c)^k) + d, 	\label{e7.1} \\
	&&	(3^{k} + 2\cdot (14c)^k)\hmul(\V_{k+1}) \leq n+1, 	 \label{e7.2} \\
	&&	\L(\V_{k+1}) \geq 2\cdot 3^{k}. 			\label{e7.3}
	\end{eqnarray}

Note that (\ref{e7.1}) holds since $s + 4t = 3^{k+2} + 2\cdot (14c)^k$.

Let $V_{\alpha_{k+1}}\in \mathcal{V}_{k+1}$ be a set containing $B(x_0, (3^{k+1} - 1)/3)$.
For every $U\in \mathcal{U}_k$ define index $\tau(U)\in \Sigma$ in the following way:
\begin{itemize}
\item
  	if $U\cap V_{\alpha_{k+1}}\neq \emptyset$ then $\tau(U)=\alpha_{k+1}$;
\item
	else choose $\tau(U)$ to be any index in $\Sigma$ so that $U\cap V_{\tau(U)}\neq \emptyset$.
\end{itemize}
Define $\U_{k+1}=\{U_\beta\}_{\beta\in \Sigma}$ where
$$
U_\beta=\bigcup_{W\in \U_k, \tau(W)=\beta} W.
$$
We claim that $\U_{k+1}$ satisfies the following conditions:
\begin{enumerate}
\item
	$\mesh(\U_{k+1}) \leq (14c)^{k+1}$;
\item
	$((3^{k+1} - 1)/3)\hmul(\U_{k+1}) \leq n+1$;
\item
	elements of $\U_{k+1}$ are disjoint;
\item
	$U\subset U_{\tau(U)}$ for every $U\in \mathcal{U}_{k}$;
\item
	$U_{\alpha_{k+1}}\in \mathcal{U}_{k+1}$ contains $B(x_0, (3^{k+1} - 1)/3)$.
\end{enumerate}
To see (1), observe
$$
	\begin{array}{ll}
	\mesh(\U_{k+1})	&
		\leq 	2\mesh(\U_{k}) + \mesh(\V_{k+1})		\\	
		&
		\leq 	2\cdot (14c)^k + c\cdot (3^{k+2} + 2\cdot (14c)^k) + d		\\
		&
		=	(2\cdot (14c)^k + 3^{k+2}\cdot c + d) + 2\cdot 14^k\cdot c^{k+1}	\\
		&
		\leq	14^k \cdot (2c^k + 3^2\cdot c + c) + 2\cdot 14^k \cdot c^{k+1}	\\
		&	
		\leq	14^k \cdot 12c^{k+1} + 2\cdot 14^k \cdot c^{k+1}			
		= 	(14c)^{k+1}.
	\end{array}
$$
Condition (2) follows from (\ref{e7.2}) and $(3^{k+1} - 1)/3 < 3^k$.
All the other conditions follow from constructions.
\end{proof}

\begin{corollary}\label{CorAsdimAN-1}
For every metric space $X$,
	$\ANasdim X \leq n$ if and only if there exist a metric space $Y$ of $\ANasdim Y = 0$ and an asymptotically Lipschitz map $q\colon Y\to X$
	with property {\rm (C)}${_{n+1}}$.
\end{corollary}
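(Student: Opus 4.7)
The plan is to deduce this directly from the machinery developed in Section 7 together with the dimension-raising result of Theorem \ref{asymptotic-AN-1}, mimicking the argument used in Corollary \ref{CorFinite-to-oneMapping}. The statement is an equivalence, so I would treat the two directions separately.

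For the ``if'' direction, I would simply invoke Theorem \ref{asymptotic-AN-1}. Assuming $Y$ is a metric space with $\ANasdim Y = 0$ and $q\colon Y\to X$ is asymptotically Lipschitz with property (C)$_{n+1}$, Theorem \ref{asymptotic-AN-1} gives
\[
\ANasdim X \leq (\ANasdim Y + 1)\cdot(n+1) - 1 = (n+1) - 1 = n,
\]
which is what we want. This direction requires no new work.

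For the ``only if'' direction, suppose $\ANasdim X \leq n$. By Theorem \ref{ThmAsdimANPrecode}, $X$ admits an $(n+1)$-precode structure $\U_0, \U_1, \ldots$ for asymptotic Assouad-Nagata dimension. I then apply Theorem \ref{ThmUltrametricCoverAsdimAN} to this sequence of covers: part (1) furnishes an ultrametric $d_C$ on $Y := \U_0$ together with a choice map $q\colon Y \to X$ (sending each $U \in \U_0$ to a chosen point of $U$) which is asymptotically Lipschitz, and part (2) shows that this $q$ satisfies condition (C)$_{n+1}$. It remains to check that $\ANasdim Y = 0$; but $(Y, d_C)$ is an ultrametric space, and for every $r<\infty$ the partition of $Y$ into $r$-balls is an $r$-disjoint cover with mesh $\leq r$, giving an $\ANasdim$-control function of the form $D(r) = r$, so $\ANasdim Y = 0$ as required.

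The only genuine content beyond bookkeeping is the observation that ultrametric spaces have $\ANasdim = 0$ (not merely $\asdim = 0$), which I would phrase as a one-line remark inside the proof. Everything else is a direct citation of Theorems \ref{asymptotic-AN-1}, \ref{ThmUltrametricCoverAsdimAN}, and \ref{ThmAsdimANPrecode}, exactly parallel to the way Corollary \ref{CorFinite-to-oneMapping} is obtained from its asymptotic-dimension counterparts; I expect no real obstacle, so the proof should be just a few lines.
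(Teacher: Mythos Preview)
Your proposal is correct and follows exactly the route the paper intends: the corollary is stated without proof because it is an immediate consequence of Theorem~\ref{asymptotic-AN-1} (for the ``if'' direction) and of Theorems~\ref{ThmAsdimANPrecode} and~\ref{ThmUltrametricCoverAsdimAN} (for the ``only if'' direction), precisely as in the parallel Corollary~\ref{CorFinite-to-oneMapping}. Your added remark that an ultrametric space has $\ANasdim = 0$ (via the linear control function $D(r)=r$) is the only point not made explicit in the paper, and it is correct.
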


\begin{corollary}
For every $n\in \NN$ and $m\geq n$ there exist metric spaces $X$ and $Y$ with $\ANasdim Y=m$ and $\ANasdim X =n+m$, respectively, and
	an asymptotically Lipschitz map $q\colon Y\to X$ with property  {\rm (C)}${_{n+1}}$.
\end{corollary}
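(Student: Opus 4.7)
The plan is to derive this corollary directly from the finite-to-one mapping theorem for the asymptotic Assouad-Nagata dimension (Corollary~\ref{CorAsdimAN-1}), combined with the product stability of property (C)${}_{n+1}$ flagged in Remark~\ref{RemPropertyC}.

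First, since $\ANasdim(\NN^n, d_{\varepsilon}) = n$, Corollary~\ref{CorAsdimAN-1} produces a metric space $Y_0$ with $\ANasdim Y_0 = 0$ together with an asymptotically Lipschitz map
\[
q_0 \colon Y_0 \longrightarrow (\NN^n, d_{\varepsilon})
\]
satisfying property (C)${}_{n+1}$.

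Next, I set $Y = Y_0 \times (\NN^m, d_{\varepsilon})$, $X = (\NN^n, d_{\varepsilon}) \times (\NN^m, d_{\varepsilon})$ (isometric to $(\NN^{n+m}, d_{\varepsilon})$), and $q = q_0 \times \id_{\NN^m} \colon Y \to X$, all equipped with the $\ell^\infty$-product metric. Four things then need to be checked: (i) $\ANasdim Y = m$, which follows from the asymptotic Assouad-Nagata analogue of Proposition~\ref{PropPropertyB-3} since $\ANasdim Y_0 = 0$; (ii) $\ANasdim X = n + m$, which is standard for the Euclidean grid $\NN^{n+m}$; (iii) $q$ is asymptotically Lipschitz as the product of an asymptotically Lipschitz map and an isometry; (iv) $q$ satisfies property (C)${}_{n+1}$, which follows from the asymptotically Lipschitz analogue of Proposition~\ref{PropPropertyB-2} applied to $q_0$ (property (C)${}_{n+1}$) and $\id_{\NN^m}$ (property (C)${}_1$), since $(n+1)\cdot 1 = n+1$.

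The only delicate point is verifying that the product stability claims in Remark~\ref{RemPropertyC} hold with the appropriate linear control constants; the arguments mirror the proofs of Propositions~\ref{PropPropertyB-2} and~\ref{PropPropertyB-3}, with the uniform bound $\diam(A_i)\leq d$ of property (B) replaced throughout by the linear bound $\diam(A_i)\leq cr+d$ of property (C), and with the $r$-disjoint $d$-bounded families in the product cover construction replaced by $r$-disjoint $(cr+d)$-bounded ones. Everything else is immediate. Note that the hypothesis $m \geq n$ in the statement is not actually used in this construction.
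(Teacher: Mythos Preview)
Your proof is correct and follows precisely the route the paper implicitly intends: the corollary is stated without proof, but the construction you give is the direct Assouad--Nagata analogue of Example~\ref{ASYM-EXAMPLE-2}, exploiting Corollary~\ref{CorAsdimAN-1} together with the product stability recorded in Remark~\ref{RemPropertyC}. Your closing observation that the hypothesis $m\geq n$ is never used is also accurate.
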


%%%
%%%
%%%
%\if0
\begin{corollary}\label{CorAsdimAN-2}
If a metric space $X$ admits a $1$-precode structure for asymptotic Assouad-Nagata dimension
	then there exists an ultrametric space $Z$ and a quasi-isometric map $f\colon Z\to X$.
\end{corollary}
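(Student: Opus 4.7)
The plan is to take $Z = \U_0$ endowed with the ultrametric $d_C$ supplied by Theorem \ref{ThmUltrametricCoverAsdimAN}, and to let $f = q \colon Z \to X$ be the canonical choice map from that theorem. Theorem \ref{ThmUltrametricCoverAsdimAN} already guarantees that $f$ is asymptotically Lipschitz and satisfies (C)${}_1$, so it remains to verify the two requirements in the definition of a quasi-isometric map. The upper distance bound $d_X(f(U), f(V)) \leq c_1 d_C(U, V) + b_1$ is precisely the asymptotic Lipschitz property, and coarse density is immediate: every $x \in X$ lies in some $U_x \in \U_0$ with $f(U_x) \in U_x$, giving $d_X(x, f(U_x)) \leq \mesh(\U_0) < \infty$.

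The key step is the lower distance bound. Let $a > 1$ and $c, r_0 > 0$ be the constants of the $1$-precode structure. Given $U \neq V$ in $\U_0$, I would set $r = \max(d_X(f(U), f(V)), r_0)$ and invoke the defining property of a $1$-precode structure for asymptotic Assouad-Nagata dimension to find $i \in \NN$ with $a^i \leq cr$ and $r\hmul(\U_i) \leq 1$. Applied to singletons, the condition $r\hmul(\U_i) \leq 1$ forces the elements of $\U_i$ to be pairwise disjoint, so every set of diameter at most $r$ sits inside a single element of $\U_i$. Applied to $\{f(U), f(V)\}$, this places both points in some common $W \in \U_i$; by uniqueness of lifts (condition (1) of Theorem \ref{ThmUltrametricCover}) and disjointness, $W$ must equal both the $\U_i$-lift of $U$ and the $\U_i$-lift of $V$, so $U \cup V \subset W$. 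Hence $p(U, V) \leq i$ and
$$d_C(U, V) \leq a^i \leq cr \leq c \cdot d_X(f(U), f(V)) + c r_0,$$
which rearranges to the desired lower bound of the form $d_X(f(U), f(V)) \geq (1/c) d_C(U, V) - r_0$.

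The main obstacle is precisely this lower bound; everything else is either supplied by Theorem \ref{ThmUltrametricCoverAsdimAN} or is a single-line verification. Once the lower bound is in place, one selects common quasi-isometry constants (taking the larger of the two proportionality constants and the larger of the two additive constants) to merge the upper and lower inequalities into the form $\gamma_f(t) = (1/C) t - B$, $\delta_f(t) = C t + B$ required by the definition. Combined with the coarse density of $f(Z)$ in $X$, this shows that $f$ is a quasi-isometric map.
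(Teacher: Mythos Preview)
Your proposal is correct and follows essentially the same route as the paper: take $Z=\U_0$ with the ultrametric $d_C$, let $f=q$, use Theorem~\ref{ThmUltrametricCoverAsdimAN} for the upper bound and coarse density, and obtain the lower bound by invoking the $1$-precode condition to find an index $i$ with $r\hmul(\U_i)\leq 1$ and $a^i$ linearly controlled by $r$, so that the $\U_i$-lifts of $U$ and $V$ coincide and $d_C(U,V)\leq a^i$. The only cosmetic difference is that you set $r=\max(d_X(f(U),f(V)),r_0)$ directly, whereas the paper rounds $d(f(U),f(V))$ up to a power $a^n$ before invoking the precode condition; your choice is in fact slightly cleaner in handling the threshold $r\geq r_0$.
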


\begin{proof}
Let $\mathcal{U}_0,\mathcal{U}_1, \ldots$ be a $1$-precode structure, and
	let  $f: Z \rightarrow X$ be the asymptotic Lipschitz map defined as in Theorem \ref{ThmUltrametricCoverAsdimAN}.
It suffices to show that $f$ is a quasi-isometry.
Let $U, V \in \U_0, U\neq V$.
Let $n\in\NN$ be such that $a^{n-1} \leq d(f(U), f(V)) \leq a^n$.
Let $c, r_0 > 0$ be as in condition (2) of Theorem \ref{ThmUltrametricCoverAsdimAN}.
Then there exists  $i\in\NN$ so that $a^i \leq c\cdot (a^n + r_0)$ and $a^n \hmul(\U_i) \leq 1$.
Let $U^{\prime}$ and $V^{\prime}$ be the unique elements of $\U_i$ so that $U \subset U^{\prime}$ and $V \subset V^{\prime}$, respectively.
Then $U = U^{\prime}$ and $V = V^{\prime}$.
This implies that $d_C(U, V) \leq a^i \leq c \cdot a^n + c\cdot r_0 \leq (c\cdot a)\cdot d(f(U), f(V)) + c\cdot r_0$.
This shows that $f$ is quasi-isometric since the image of $f$ is apparently coarsely dense.
\end{proof}

\begin{corollary}\label{CorAsdimAN-3}
For every metric space $(X, d)$,
	$\ANasdim (X, d) = 0$ if and only if there exists an ultrametric $\rho$ on $X$ so that $\id: (X, d) \rightarrow (X, \rho)$ is a quasi-isometric map.
\end{corollary}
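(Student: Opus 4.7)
The plan is to establish each direction of the biconditional by invoking the precode machinery developed earlier, in close analogy with Corollary \ref{CorAsdimUltrametric}.

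For the direction $(\Rightarrow)$, starting from $\ANasdim(X,d) = 0$, I would first apply Theorem \ref{ThmAsdimANPrecode} (with $n=0$) to obtain a $1$-precode structure $\mathcal{U}_0, \mathcal{U}_1, \ldots$ for asymptotic Assouad-Nagata dimension on $X$. The crucial observation is that the inductive construction in that proof begins with $\mathcal{U}_0 = \{\{x\}\}_{x\in X}$, so there is a canonical bijection $\mathcal{U}_0 \leftrightarrow X$, $\{x\}\leftrightarrow x$. The ultrametric $d_C$ on $\mathcal{U}_0$ supplied by Theorem \ref{ThmUltrametricCoverAsdimAN}(1) pulls back through this bijection to an ultrametric $\rho$ on $X$. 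Under this identification the point-selection map $q$ of Theorem \ref{ThmUltrametricCoverAsdimAN} becomes the identity $\id\colon (X,\rho)\to (X,d)$, and Corollary \ref{CorAsdimAN-2} guarantees that it is a quasi-isometric map. Since quasi-isometry is a symmetric relation between metric spaces (the affine two-sided bounds can be rearranged at the cost of replacing the constants $c,b$ by suitable new ones), the map $\id\colon (X,d)\to (X,\rho)$ is then also quasi-isometric.

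For the direction $(\Leftarrow)$, I would first verify that any ultrametric space $(X,\rho)$ has $\ANasdim = 0$: for each $r<\infty$ the cover of $X$ by open $\rho$-balls of radius $r$ is $r$-disjoint and has mesh $\leq r$, so $D(r)=r$ is a $0$-dimensional control function of the required affine form. Thus $\ANasdim(X,\rho)=0$. It then suffices to invoke the (standard) invariance of $\ANasdim$ under quasi-isometry: the affine two-sided distance bounds defining $\id\colon(X,d)\to(X,\rho)$ transport a $0$-dimensional affine control function on the $\rho$-side to one of the same affine form on the $d$-side, only adjusting the constants.

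The main obstacle, though mild, is the honest identification of the ultrametric target space produced by Corollary \ref{CorAsdimAN-2} with $X$ itself. This is why one must look inside the proof of Theorem \ref{ThmAsdimANPrecode} and exploit that $\mathcal{U}_0$ was deliberately taken to be the cover by singletons; without this inspection, one would only obtain \emph{some} ultrametric space quasi-isometric to $X$, rather than an ultrametric \emph{on} $X$ making the identity quasi-isometric.
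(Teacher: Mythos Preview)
Your proposal is correct and follows essentially the same route as the paper, which simply says the result ``easily follows from Theorem~\ref{ThmAsdimANPrecode} and Corollary~\ref{CorAsdimAN-2}.'' You supply the details the paper omits: the identification of $\mathcal{U}_0$ with $X$ via the singleton cover in the proof of Theorem~\ref{ThmAsdimANPrecode}, the symmetry of quasi-isometry, and an explicit argument for the $(\Leftarrow)$ direction, which the paper does not spell out at all.
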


\begin{proof}
The corollary easily follows from
	Theorem \ref{ThmAsdimANPrecode} and Corollary \ref{CorAsdimAN-2}.
\end{proof}
%\fi
%---------------------------------------------------------------------------------------

\end{document}